\pgfplotsset{compat=newest}
\tikzset{
    > = {latex}
}
\theoremstyle{definition}
\newtheorem{lemma}{Lemma}
\newtheorem{theorem}{Theorem}
\newtheorem{corollary}{Corollary}
\newtheorem{definition}{Definition}
\newcommand{\I}{\mathcal{I}}
\newcommand{\mlaf}{\mathrm{AF}_{\mathrm{ml}}}
\newcommand{\caaf}{\mathrm{AF}_{\mathrm{ca}}}
\newcommand{\footremember}[2]{%
    \footnote{#2}
    \newcounter{#1}
    \setcounter{#1}{\value{footnote}}%
}
\newcommand{\footrecall}[1]{%
    \footnotemark[\value{#1}]%
} 
\title{Mathematical Models and Exact Algorithms \\for the Colored Bin Packing Problem}
\author{
  Yulle G. F. Borges\footnote{Corresponding Author, Email Address: \texttt{glebbyo@ic.unicamp.br}} \footremember{ic}{Institute of Computing, University of Campinas, 13083--852, Campinas--SP, Brazil.} \and 
  Rafael C. S. Schouery\footrecall{ic} \and 
  Fl\'avio K. Miyazawa\footrecall{ic}
}
\date{}
\begin{document}

\maketitle

\begin{abstract}
  This paper focuses on exact approaches for the Colored Bin Packing Problem (CBPP), a generalization of the classical one-dimensional Bin Packing Problem in which each item has, in addition to its length, a color, and no two items of the same color can appear consecutively in the same bin. To simplify modeling, we present a characterization of any feasible packing of this problem in a way that does not depend on its ordering.
  Furthermore, we present four exact algorithms for the CBPP\@.
  First, we propose a generalization of Valério de Carvalho's arc flow formulation for the CBPP using a graph with multiple layers, each representing a color.
  Second, we present an improved arc flow formulation that uses a more compact graph and has the same linear relaxation bound as the first formulation.
  And finally, we design two exponential set-partition models based on reductions to a generalized vehicle routing problem, which are solved by a branch-cut-and-price algorithm through VRPSolver.
  To compare the proposed algorithms, a varied benchmark set with $574$ instances of the CBPP is presented. Results show that the best model, our improved arc flow formulation, was able to solve over $62\%$ of the proposed instances to optimality, the largest of which with~$500$ items and $37$ colors. While being able to solve fewer instances in total, the set-partition models exceeded their arc flow counterparts in instances with a very small number of colors.

\paragraph{Keywords} bin packing problem; \and arc flow; \and VRPSolver; \and mixed integer linear programming;
\end{abstract}

%\tarefadoyulle{Colocar no formato da revista}

\section{Introduction\label{sec-intro}}
The \textit{Bin Packing Problem} (BPP) consists of packing a set of items, each with a positive length, into the smallest possible number of bins such that the sum of the lengths of the items in a bin does not exceed its given capacity. A popular generalization of the BPP is the \textit{Cutting Stock Problem} (CSP), in which items have a positive demand value that indicates the number of times they must be included in a solution.
For generality, we use the CSP notation for some of the models presented in this work.

Applications of cutting and packing problems have been studied for decades. In the packing context, we have many problems in production planning and load balancing, in which one wants to fit a set of boxes into the smallest possible number of containers. On the other hand, the term cutting has been adopted in the industrial process of cutting rolls of stock material into smaller pieces in order to satisfy a given demand while minimizing the number of rolls used, which is often encountered in industries such as paper, metal, and fabric. These problems can also be encountered as part of bigger optimization problems found in real-world applications such as capacitated vehicle routing, load balancing scheduling, and capacitated facility location, among others.

The \textit{Colored Bin Packing Problem} (CBPP) is a generalization of the BPP in which each item, in addition to a length and demand, also has a color. The objective is to find a packing of all items in the smallest number of bins possible, in which no two items from the same color appear consecutively in the same bin. That is, for every bin in the solution there must exist at least one permutation of the items in it (considering all their respective copies in the case with demands) such that adjacent items always have different colors.

One possible application for the CBPP, in the particular case in which there are only two colors, is when there must be an intercalation of two types of items in a schedule. \citet{baloghBDKT13} describe the example in which white items can represent blocks of a radio show and black items represent commercial breaks, with the items' lengths indicating their duration. These blocks of content must be scheduled in an alternating form and respect the shows' duration, usually a time window of one hour.

When considering multiple colors, the same idea applies: CBPP can model the need to alternate items of different types. For example, \citet{dosaE14} describe an application where one wants to print batches of flyers on papers with different colors, and must alternate the colors to make it easy to split the print into the correct batches afterward. As another example, one can consider alternating different kinds of information and advertisement on the screen of a smartphone on sites like YouTube and other social media platforms, as described by~\citet{baloghBDKT13}.

Notice also that the development of good algorithms for CBPP could lead to the development of good algorithms for similar problems such as knapsack or scheduling problems where the items must alternate in some form. In fact, one can consider the CBPP as part of a larger class of packing problems where there are constraints between items on how or if they can be packed together.

Probably, the most famous problem in this class is the Bin Packing Problem with Conflicts, considered by \citet{jansenO97} and \citet{jansen99}, where a list of conflicts between pairs of items is given, and two items can be packed in the same bin only if there is no conflict between them. This problem appears, for example, as a sub-problem when solving BPP with a branch-and-price algorithm using a branching rule that either forces two items to be packed together or creates a conflict between them, as considered by \citet{vanceBJN94}.

Another problem in this class is the Class Constrained Bin Packing Problem \citep{shachnaiT01}, where every item belongs to some class, and there is a limit on the number of different classes that can be used in the same bin. One application for this problem is when there is a limited number of scissors in a machine for cutting metal coils, and the pieces can be grouped into classes of treatments to be received after the cut (for example, reducing thickness). Thus, the coils are cut into classes, each class receives its corresponding treatment, and then the final pieces are cut. This problem also has several applications in data load balance, such as Video-on-Demand servers \citep{xavierM08}. One main difference between the Class Constrained Bin Packing Problem, when compared with Bin Packing Problem with Conflicts, is that in the former, there is no direct conflict between items but, instead, what prevents us from adding a new item to a bin is the whole set of items already packed. Nonetheless, the ordering of the packing does not matter.

\citet{peetersD04} presented the Co-Printing Problem, a variant of the Class Constrained Bin Packing Problem where items have the same length, but can be of multiple classes at the same time. This is done to model a problem when printing Tetra Pak packages in a printer that can use a limited number of colors at once.

There are other packing problems where the order of the packing matters, such as Bin Packing with Largest in the Bottom Constraint proposed by \citet{manyemSV03}, where the largest items must be packed before the smaller ones in the bin. Notice that the difficulty of these problems usually arises when the items must be packed in an online fashion. The CBPP problem lies in a frontier between these kinds of problems. While the order of each packing matters for the online setting, in the offline setting, which is the focus of this paper, what matters is only the number of items of each color.

\subsection{Previous Works}
Previous works on CBPP are either from the perspective of approximation or online algorithms. These works consider three different versions of the problem: the \emph{offline} version, where the whole input is known at the beginning of the algorithm; the \emph{offline restricted} version, where the input is known at the beginning of the algorithm, but the items must be packed in the given order; and the \emph{online} version, where the items arrive in an online fashion and must be packed in a selected bin, without any chance of being changed afterward.

In 2012, \citet{baloghBDKT13} introduced the Black and White Bin Packing Problem (BWBPP) as a precursor of the CBPP in which only two colors are considered. They presented a lower bound of $1.7213$ on any online algorithm for the BWBPP, as well as a~$3$-competitive algorithm. They also give a $2.5$-approximation algorithm and an APTAS for the offline (unrestricted) version. These results were later published as two full articles in journals~\citep{baloghBDEKLT15,baloghBDEKT15}.

In 2014, \citet{dosaE14} introduced the CBPP\@. They proved that there is no optimal online algorithm for the zero-sized items variant of the CBPP, contrasting with BWBPP where such an algorithm exists. Their main results are a $4$-competitive online algorithm and a lower bound of $2$ on the asymptotic competitive ratio of any online algorithm. This last result also holds for the BWBPP, thus improving the previous result by \citet{baloghBDKT13}. \citet{chenHBT15} studied the special case of the BWBPP where items have lengths of at most half of the capacity of the bin, and presented a $8/3$-competitive algorithm for it, improving on the result of \citet{baloghBDKT13} which had a competitive ratio of $3$ even for this particular case.

Finally, \citet{bohmDESV18} considered the case of the CBPP where every item has length zero. They proved that the offline restricted optimum is always equal to the \textit{color discrepancy}, a measure of how unbalanced the colors in the instance are. Then, they provided an asymptotically $1.5$-competitive algorithm, which is, in fact, optimal as any deterministic online algorithm is shown to have at least this competitive ratio. The algorithm also has an absolute competitive ratio of $5/3$. For the general case, that is, items with arbitrary lengths and at least three colors, they designed a $3.5$-competitive algorithm and presented a lower bound of $2.5$ on the competitive ratio of any deterministic online algorithm for this problem. Interestingly, they also show that classical Bin Packing algorithms First Fit, Best Fit, and Worst Fit do not present a constant competitive ratio for the CBPP\@. For the BWBPP problem on the other hand, they prove that any fit algorithms (such as First Fit) have an absolute competitive ratio of $3$ and that any deterministic online algorithm has an asymptotic competitive ratio of $2$.

More recently, \citet{biloCMM18,biloCMM20} consider a game-theory version of the problem, where players control the items and decide where to pack them.

\subsection{Related Works}
In this section, we discuss some previous works that are directly connected to the contributions of this paper. The BPP and CSP have been the birthplace of some crucial development in mixed integer linear programming in the past. \citet{gilmoreG61} modeled the CSP with a set-covering model, in which columns represent all possible cutting configurations of a stock roll. To solve this model, they proposed the \textit{column generation} technique, in which cutting configurations are considered implicitly, that is, columns are only generated on an on-demand basis in an iterative process. This set-covering model was shown to have a very strong linear relaxation. Let $LP(I)$ be its linear relaxation value and $OPT(I)$ be the optimal value for instance $I$, the instance is said to have the \textit{Integer Round Up Property} (IRUP) if $OPT(I) = \lceil LP(I)\rceil$. Similarly, instance $I$ is said to have the \textit{Modified Integer Round Up Property} (MIRUP) if $OPT(I) - \lceil LP(I) \rceil \leq 1$. It is conjectured that MIRUP holds for all instances of the CSP and BPP~\citep{scheithauerT95}.

Later on, \citet{carvalho99} proposed the \textit{arc flow} model for the CSP based on a previous work of \citet{wolsey77} that connected integer programming with network flows. Given a directed graph where each vertex represents a cutting level of a stock roll, and arcs represent items, with its weight equal to the corresponding item length, a feasible cutting configuration can be found by computing a shortest path in such a graph. \citet{carvalho99} used this idea as the basis for a branch-and-price algorithm for the CSP and showed that the arc flow formulation is equivalent to Gilmore and Gomory's set-covering model in terms of their linear relaxation.

\citet{kramerIL21} approached a problem of job scheduling on parallel machines with family setup times, in which an additional setup time was required between two consecutive jobs of different families. They proposed an original arc flow formulation that uses one layer per family of jobs, modeling setup times as arcs crossing between layers. The authors also proposed another arc flow model that uses a single layer, but divides arcs into job-arcs, dummy-arcs which are zero-sized and allow for the transition between jobs from the same family, and setup-arcs which represent the setup time between two different families. The model requires that any path must alternate job-arcs with either a setup-arc or a dummy arc.

Recently, \citet{pessoaSUV20} proposed a generic branch-and-cut-and-price solver that included several computational optimizations and improvements that were developed recently in the vehicle routing literature. Their model is generic enough to be applied to many combinatorial optimization problems. This was achieved by designing a generic Vehicle Routing Problem (VRP) variant that can be modeled as a set-cover or set-partition problem in which columns can be generated by solving a Resource-Constrained Shortest Path Problem (RCSP). Additionally, several critical algorithmic improvements were generalized for this variant. Even though the model's priority is generality, the authors presented experimental results for the BPP that are very competitive with state-of-the-art problem-specific algorithms and models at the time.

\subsection{Our Contributions}

First, we provide a characterization of any feasible packing for the CBPP in a way that does not depend on its ordering, making algorithmic modeling much simpler. This characterization, along with other preliminary concepts and definitions are presented in Section~\ref{sec-preliminaries}. Afterward, we propose the pseudo-polynomial multilayered arc flow formulation, which generalizes Valério de Carvalho's classic CSP model for the CBPP\@. Furthermore, we propose the original color-alternating arc flow formulation which works with a much more compact graph while retaining the same strong linear relaxation. These pseudo-polynomial formulations are discussed in Section~\ref{sec-pseudo}. Later on, we propose two exponential set-partition models based on reductions to a generalized vehicle routing problem which are solved by a branch-cut-and-price algorithm through VRPSolver. These models are detailed in Section~\ref{sec-exp}. Finally, we propose a new benchmark set that includes uniformly randomly generated instances, instances adapted from the CSP/BPP literature, as well as randomly generated instances with a Zipf distribution of colors. This benchmark set, as well as the numerical experiments evaluating all the models introduced in this work, are presented in Section~\ref{sec-experiments}. In Section~\ref{sec-conclusions}, we recapitulate the all results and provide our final remarks.

\section{Preliminaries\label{sec-preliminaries}}
In this section, we provide a few definitions to contextualize our contributions. Along the text, for any $k \in \mathbb{Z}^+$, we denote $\{1, 2, \dots, k\}$ by $[k]$.

\paragraph{\textbf{Colored Bin Packing Problem (CBPP)}} An instance of the problem is composed of an integer bin capacity~$L > 0$, an integer number of colors $Q \geq 2$ and a set $\I = [m]$ of items, each with an integer length~$l_u > 0$, integer demand value $d_u > 0$ and color $c_u \in [Q]$ for all~$u \in \I$. Since each item has a demand, we consider that all pairs of length and color are unique in~$\I$. We also consider an auxiliary set $\I'$ composed of $d_u$ copies of each item~$u \in \mathcal{I}$. This way, a solution for CBPP is a partition $\mathcal{B}$ of $\I'$, such that for each part $B \in \mathcal{B}$, the total length of the items in~$B$ is at most $L$, and there exists at least one permutation of $B$ in which no two items of the same color appear consecutively. The objective is to find a solution that minimizes the number of parts. We consider that for any $S \subseteq \mathcal{I}$, $\{S_1, S_2, \dots, S_Q\}$ is a partition of $S$ by color, that is~$u \in S_q$ if and only if $u \in S$ and $c_u = q$.

Before proceeding, we introduce the following example to better illustrate the problem. Consider an instance with bin capacity equal to $8$ and items as described in the figure below.

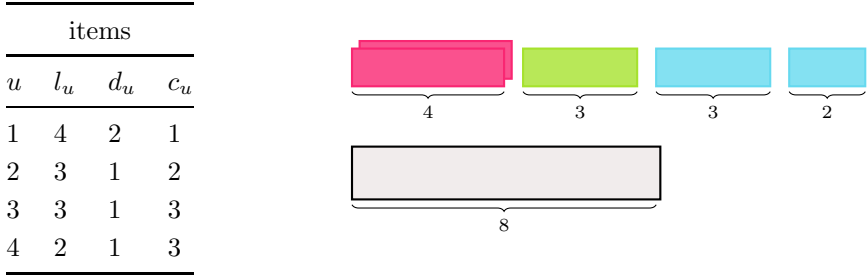
\begin{figure}[ht]
    \begin{minipage}{.4\textwidth}
            \centering
            \begin{tabular}{@{}llll@{}}
            \toprule
            \multicolumn{4}{c}{items}   \\ \midrule
            $u$ & $l_u$ & $d_u$ & $c_u$ \\ \midrule
            1   & 4     & 2     & 1     \\
            2   & 3     & 1     & 2     \\
            3   & 3     & 1     & 3     \\
            4   & 2     & 1     & 3     \\ \bottomrule
            \end{tabular}
    \end{minipage}%
    \begin{minipage}{.55\textwidth}
        \begin{tikzpicture}[scale=0.5]\tiny
            \definecolor{molokai_purple}{HTML}{AE81FF}
            \definecolor{molokai_yellow}{HTML}{E6DB74}
            \definecolor{molokai_red}   {HTML}{F92672}
            \definecolor{molokai_gray}  {HTML}{BCA3A3}
            \definecolor{molokai_blue}  {HTML}{66D9EF}
            \definecolor{molokai_green} {HTML}{A6E22E}
            \pgfmathsetmacro\buffer{0.2}
            %\draw[help lines] (0, -4) grid (11, 5);
                \begin{scope}[shift = ({0,8})]
                    \draw[thick, molokai_red, fill = molokai_red!80] (0+0.2, 0+0.2) rectangle ++(4, 1);
                    \draw[thick, molokai_red, fill = molokai_red!80] (0, 0) rectangle ++(4, 1);
                    \draw[decoration = {brace, mirror, raise = 2pt}, decorate] (0, 0) -- node[below = 5pt] {\scriptsize $4$} ++(4, 0);

                    \draw[thick, molokai_green, fill = molokai_green!80] (4.5, 0) rectangle ++(3, 1);
                    \draw[decoration = {brace, mirror, raise = 2pt}, decorate] (4.5, 0) -- node[below = 5pt] {\scriptsize $3$} ++(3, 0);

                    \draw[thick, molokai_blue, fill = molokai_blue!80] (8, 0) rectangle ++(3, 1);
                    \draw[decoration = {brace, mirror, raise = 2pt}, decorate] (8, 0) -- node[below = 5pt] {\scriptsize $3$} ++(3, 0);

                    \draw[thick, molokai_blue, fill = molokai_blue!80] (11.5, 0) rectangle ++(2, 1);

                    \draw[decoration = {brace, mirror, raise = 2pt}, decorate] (11.5, 0) -- node[below = 5pt] {\scriptsize $2$} ++(2, 0);

                    \draw[thick, fill = molokai_gray, fill opacity = 0.2] (0, -3) rectangle ++ (8+2*0.055, {1+2*\buffer});
                    \draw[decoration = {brace, mirror, raise = 2pt}, decorate] (0, -3) -- node[below = 5pt] {\scriptsize $8$} ++(8, 0);
                \end{scope}
        \end{tikzpicture}
    \end{minipage}
    \caption{An example of an instance of the CBPP with $4$ items, $3$ colors and bin capacity $8$.\label{example1}}
\end{figure}

An optimal solution for the instance in the example would be to pack the first copy of item $1$ with item $2$ in a single bin, and the second one with item $3$ in another bin. This would make it necessary to use a third bin to pack item $4$ by itself. This solution is illustrated in Figure~\ref{fig:sol1}. 

\begin{figure}[ht]
    \centering
    \begin{tikzpicture}[scale=0.5]\tiny
        \definecolor{molokai_purple}{HTML}{AE81FF}
        \definecolor{molokai_yellow}{HTML}{E6DB74}
        \definecolor{molokai_red}   {HTML}{F92672}
        \definecolor{molokai_gray}  {HTML}{BCA3A3}
        \definecolor{molokai_blue}  {HTML}{66D9EF}
        \definecolor{molokai_green} {HTML}{A6E22E}
        \pgfmathsetmacro\buffer{0.2}
        \begin{scope}[shift = ({0,0})]
            \draw[thick, fill = molokai_gray, fill opacity = 0.2] (0-2*\pgflinewidth,  2-\buffer) rectangle ++ (8+2*0.055, {1+2*\buffer});

            \draw[thick, fill = molokai_gray, fill opacity = 0.2] (0-2*\pgflinewidth,  0-\buffer) rectangle ++ (8+2*0.055, {1+2*\buffer});
            
            \draw[thick, fill = molokai_gray, fill opacity = 0.2] (0-2*\pgflinewidth, -2-\buffer) rectangle ++ (8+2*0.055, {1+2*\buffer});
        \end{scope}

        \begin{scope}[shift = ({0,0})]
            \draw[thick, molokai_red, fill = molokai_red!80] (0, 2) rectangle ++(4, 1);

            \draw[thick, molokai_red, fill = molokai_red!80] (0, 0) rectangle ++(4, 1);

            \draw[thick, molokai_green, fill = molokai_green!80] (4, 2) rectangle ++(3, 1);

            \draw[thick, molokai_blue, fill = molokai_blue!80] (4, 0) rectangle ++(3, 1);

            \draw[thick, molokai_blue, fill = molokai_blue!80] (0, -2) rectangle ++(2, 1);
        \end{scope}
    \end{tikzpicture}
    \caption{An optimal solution for the previous example.\label{fig:sol1}}
\end{figure}
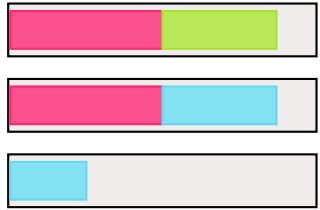

We call a \textit{packing pattern} any subsequence $P$ of items in $\I'$ such that the sum of the lengths of the items in $P$ is smaller than or equal to $L$ and, for $1 \leq k < |P|$, the color of the~$k$-th item of $P$ is different from that of the $(k+1)$-th one. With this, we may also describe a solution for the CBPP as the smallest set of packing patterns that satisfy the demands exactly for every item $u \in \mathcal{I}$. Note that, unlike in the CSP, a solution for the CBPP cannot contain more than $d_u$ copies of an item $u \in \mathcal{I}$, as otherwise two items of the same color could be intercalated with an additional differently colored item, possibly reducing the necessary number of bins.

In the following, we recall the important concept of \textit{color discrepancy} defined by~\cite{baloghBDEKT15} and generalized by~\cite{bohmDESV18}. Let $S$ be a fixed subset of $\I'$ and let $s_{q,u} = 1$ if item $u \in S$ is from color $q \in [Q]$ and $s_{q,u} = -1$ if item~$u$ is from any color other than $q$.  The \textit{discrepancy} between $q$-items and non-$q$-items in $S$ is $\delta^S_q = \sum_{u \in S} s_{q,u}$ for all $q \in [Q]$, and the \textit{color discrepancy} of~$S$ as~$\delta^S = \max_{q \in [Q]} \delta^S_q$. Finally, we define the \textit{critical color} of $S$ as $q_S^* = \arg\!\max_{q \in [Q]} \delta^S_q$ (choosing arbitrarily in case of a tie). We omit~$S$ from the notation in $\delta$ and $q^*$ unless it is necessary since it is usually clear by context. Theorem~\ref{theo:definition} gives us simple conditions to determine whether any subsequence of items admits a \emph{color-alternating permutation}, i.e.\ the items in it can be rearranged in such a way that no two items of the same color appear side-by-side. This allows us to verify, in linear time, if there exists a permutation of a given subset of items $S$ with $\sum_{u \in S} l_u \leq L$ that is a valid packing pattern. 

\begin{theorem}\label{theo:definition}
    Let $S \subseteq \I'$ and $\sum_{u \in S} l_u \leq L$. The following are equivalent:
    \begin{enumerate}
        \item $S$ admits a color-alternating permutation;
        \item $|S_{q^*}| \leq \lceil|S|/2\rceil$;
        \item $\delta \leq 1$.
    \end{enumerate}
\end{theorem}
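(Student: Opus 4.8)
The plan is to establish the three-way equivalence through $(2)\Leftrightarrow(3)$ (pure arithmetic), $(1)\Rightarrow(2)$ (an easy counting bound), and $(2)\Rightarrow(1)$ (a constructive argument, which carries the real content). I would note first that the standing hypothesis $\sum_{u\in S}l_u\le L$ never enters the argument: the equivalence concerns only the multiset of colors appearing in $S$. For $(2)\Leftrightarrow(3)$, unfolding the definition gives $\delta_q=|S_q|-(|S|-|S_q|)=2|S_q|-|S|$, so by the definition of the critical color $\delta=\max_q\delta_q=2|S_{q^*}|-|S|$; hence $\delta\le 1$ is equivalent to $|S_{q^*}|\le(|S|+1)/2$, which, $|S_{q^*}|$ being an integer, is exactly $|S_{q^*}|\le\lceil|S|/2\rceil$.

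For $(1)\Rightarrow(2)$, I would fix a color-alternating permutation of $S$ and label its positions $1,\dots,|S|$; the positions holding items of color $q^*$ are pairwise non-adjacent, hence form an independent set in the path on $|S|$ vertices, whose independence number is $\lceil|S|/2\rceil$, so $|S_{q^*}|\le\lceil|S|/2\rceil$. The converse $(2)\Rightarrow(1)$ I would prove by induction on $n=|S|$, with $n\le 1$ trivial: for $n\ge 2$, pick a critical color $q^*$, delete one item of this color, and call the result $S'$.

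The first point to settle is that $S'$ (which has $n-1$ items) again satisfies condition~(2). Indeed $|S'_{q^*}|=|S_{q^*}|-1\le\lceil n/2\rceil-1\le\lfloor n/2\rfloor=\lceil(n-1)/2\rceil$, while for every other color $q$ we have $|S'_q|=|S_q|\le|S_{q^*}|\le\lceil n/2\rceil$, and this last quantity can exceed $\lceil(n-1)/2\rceil$ only when $n$ is odd and $|S_q|=(n+1)/2$; but then $|S_{q^*}|=(n+1)/2$ as well, so these two colors alone contribute $n+1>n$ items, a contradiction. Hence, by induction, $S'$ admits a color-alternating permutation $\sigma$, into which I would re-insert the deleted $q^*$-item. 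There are $n$ candidate gaps (the $n-2$ internal ones and the two ends); a gap is \emph{forbidden} when one of its (at most two) neighbors already has color $q^*$, so at most $2|S'_{q^*}|=2(|S_{q^*}|-1)$ gaps are forbidden, and $|S_{q^*}|\le\lceil n/2\rceil$ gives $2(|S_{q^*}|-1)\le 2\lceil n/2\rceil-2<n$. Thus a free gap exists, and inserting the item there preserves every old adjacency while creating only adjacencies between the new $q^*$-item and non-$q^*$ items, so the extended sequence is color-alternating.

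I expect the main obstacle to lie in the bookkeeping of $(2)\Rightarrow(1)$: verifying that condition~(2) is genuinely inherited by $S'$ — the delicate case being a tie in which deleting a $q^*$-item promotes another color to critical, handled by the parity argument above — and checking the counting bound $2(\lceil n/2\rceil-1)<n$ for both parities of $n$. If the induction turns out to be clumsy to write, a direct construction is available as a fallback: sort the items by non-increasing color frequency and deal them into the positions $1,3,5,\dots$ and then $2,4,6,\dots$; condition~(2) forces items of the same color to be placed at least two positions apart except possibly across the wrap-around, and a short case check rules out a clash even there. I would nonetheless expect the inductive argument to be the cleaner presentation.
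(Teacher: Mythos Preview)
Your proof is correct, but the route for the substantive implication differs from the paper's. The paper closes the cycle $(1)\Rightarrow(2)\Rightarrow(3)\Rightarrow(1)$ and handles the hard step $(3)\Rightarrow(1)$ with an explicit one-shot construction: it opens $k=|S_{q^*}|$ separate sequences, drops one item of the critical color into each, then cyclically distributes the remaining items (processed color by color, in non-increasing frequency) across these $k$ sequences before concatenating them. Your primary argument for the same content, written as $(2)\Rightarrow(1)$, is instead an induction on $|S|$ that peels off one critical-color item and re-inserts it into a free gap; the bookkeeping you identify (inheritance of condition~(2) under the deletion, and the gap count $2(\lceil n/2\rceil-1)<n$) is exactly what is needed and is handled correctly. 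The trade-off is the usual one: the paper's algorithm yields a direct linear-time procedure and makes the structure of the resulting permutation transparent, whereas your induction is shorter to verify but produces the permutation only recursively. Amusingly, the ``fallback'' you sketch---sorting by frequency and dealing into odd then even positions---is a close cousin of the paper's construction, so either presentation would align well with what the authors do.
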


The proof of this theorem follows from the lemmas below.

\begin{lemma}\label{lemma:necessary}
  If $S$ admits a color-alternating permutation, then $|S_{q^*}| \leq \lceil|S|/2\rceil$. 
\end{lemma}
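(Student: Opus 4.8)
The plan is to argue by a direct counting (pigeonhole) argument on the positions occupied by the critical-color items inside a fixed color-alternating permutation. Write $n = |S|$ and $k = |S_{q^*}|$. If $k \le 1$ or $n \le 1$ the inequality $k \le \lceil n/2\rceil$ is immediate, so assume $k \ge 2$. Fix a color-alternating permutation $a_1, a_2, \dots, a_n$ of $S$ and let $i_1 < i_2 < \dots < i_k$ be the positions at which the items of color $q^*$ appear.

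The key observation is that these positions are pairwise non-consecutive: if $i_{j+1} = i_j + 1$ for some $j$, then $a_{i_j}$ and $a_{i_{j+1}}$ would be two items of color $q^*$ sitting side by side, contradicting that the permutation is color-alternating. Hence for each $j \in [k-1]$ there is at least one index strictly between $i_j$ and $i_{j+1}$, and all of these ``gap'' indices, together with the $k$ indices $i_1, \dots, i_k$ themselves, form $k + (k-1)$ pairwise distinct elements of $[n]$. This gives $n \ge 2k - 1$, i.e.\ $k \le (n+1)/2$, and since $k$ is an integer we conclude $k \le \lfloor (n+1)/2\rfloor = \lceil n/2\rceil$, which is exactly the claimed bound $|S_{q^*}| \le \lceil|S|/2\rceil$.

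I expect no genuine obstacle here: the only points requiring a little care are the trivial base cases ($k \le 1$, $n \le 1$, or $S = \emptyset$) and the fact that $q^*$ plays no special role in the argument — the same bound holds for the number of items of \emph{any} single color, and the lemma simply applies it to the color attaining the maximum. It may also be worth noting, when assembling Theorem~\ref{theo:definition}, that this lemma is precisely the implication $1 \Rightarrow 2$, that $2 \Leftrightarrow 3$ follows immediately from the identity $\delta = \delta_{q^*} = 2|S_{q^*}| - |S|$ together with $\lceil n/2\rceil = \lfloor (n+1)/2\rfloor$ for integer $n$, and that the substantive direction — constructing a color-alternating permutation whenever $\delta \le 1$ — is the one that needs the complementary lemma.
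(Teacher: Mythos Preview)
Your proof is correct and is essentially the same pigeonhole argument as the paper's: both derive the inequality $n \geq 2k - 1$ from the fact that $k$ items of color $q^*$ require at least $k-1$ separators of other colors in any color-alternating arrangement. The only cosmetic difference is that the paper phrases it as a contrapositive (assuming $k \geq \lceil n/2\rceil + 1$ and deriving $n \geq 2k - 1 \geq n + 1$), while you argue directly via the gap positions $i_1 < \dots < i_k$; the underlying counting is identical.
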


\begin{proof}
  We prove that if~$|S_{q^*}| \geq \lceil|S|/2\rceil + 1$, then $S$ does not admit a color-alternating permutation. By the pigeonhole principle, since there are $|S_{q^*}|$ \mbox{$q^*$-items}, there must be at least $|S_{q^*}| - 1$ non-$q^*$-items for $S$ to admit a color-alternating permutation. Thus, there must be at least $2|S_{q^*}| - 1 \geq 2(\lceil|S|/2\rceil + 1) - 1 \geq |S| + 1$ items in total, a contradiction.
\end{proof}

\begin{lemma}\label{lemma:delta}
  If $|S_{q^*}| \leq \lceil|S|/2\rceil$, then $\delta \leq 1$.
\end{lemma}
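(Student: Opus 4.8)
The plan is to unpack the definitions of $\delta$ and $\delta_q$ and show that the hypothesis $|S_{q^*}| \leq \lceil |S|/2 \rceil$ forces $\delta_q \leq 1$ for every color $q$, hence in particular for the critical color, giving $\delta = \delta_{q^*} \leq 1$. First I would record the arithmetic identity that relates $\delta_q$ to the cardinalities: since $s_{q,u} = 1$ when $c_u = q$ and $s_{q,u} = -1$ otherwise, we have $\delta_q^S = |S_q| - (|S| - |S_q|) = 2|S_q| - |S|$. In particular $\delta = \delta_{q^*} = 2|S_{q^*}| - |S|$ because $q^*$ is chosen to maximize $\delta_q$, which (by the same identity) is the same as maximizing $|S_q|$, so $|S_{q^*}| = \max_{q} |S_q|$.

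Next I would plug the hypothesis into this identity. From $|S_{q^*}| \leq \lceil |S|/2 \rceil$ we get $\delta = 2|S_{q^*}| - |S| \leq 2\lceil |S|/2 \rceil - |S|$. The only remaining step is the elementary bound $2\lceil |S|/2 \rceil - |S| \leq 1$, which one checks by splitting on the parity of $|S|$: if $|S|$ is even then $2\lceil |S|/2 \rceil - |S| = 0$, and if $|S|$ is odd then $2\lceil |S|/2 \rceil - |S| = 1$. Either way $\delta \leq 1$, as desired.

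I do not anticipate a genuine obstacle here — the lemma is essentially a restatement of the hypothesis in the $\delta$ notation. The only point that needs a line of care is making explicit that $q^*$ maximizes $|S_q|$ (not merely $\delta_q$), so that $|S_{q^*}|$ appearing in the hypothesis is the same quantity as the one governing $\delta$; this is immediate from the identity $\delta_q = 2|S_q| - |S|$ since $|S|$ does not depend on $q$. The rest is the parity check above.
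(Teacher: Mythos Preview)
Your proof is correct and essentially identical to the paper's. The paper writes $\delta = |S_{q^*}| - t$ with $t = |S| - |S_{q^*}| \geq \lfloor |S|/2 \rfloor$ and bounds $\delta \leq \lceil |S|/2 \rceil - \lfloor |S|/2 \rfloor \leq 1$; your version uses the equivalent identity $\delta = 2|S_{q^*}| - |S|$ and the parity check $2\lceil |S|/2 \rceil - |S| \leq 1$, which is the same computation.
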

\begin{proof}
  Since $\delta$ is the difference of the number of $q^*$-items and non-$q^*$-items in $S$, we can write it as $\delta = |S_{q^*}| - t$, where $t$ is the amount of non-$q^*$-items in $S$. Also, notice that if~$|S_{q^*}| \leq \lceil|S|/2\rceil$ holds, then $t \geq \lfloor|S|/2\rfloor$. Thus, we have
  \begin{align*}
      \delta = |S_{q^*}| - t 
             \leq |S_{q^*}| - \lfloor|S|/2\rfloor        
             \leq \lceil|S|/2\rceil - \lfloor|S|/2\rfloor 
             \leq 1.
           & \qedhere
  \end{align*}
\end{proof}

\begin{lemma}\label{lemma:sufficient}
  If $\delta \leq 1$, then $S$ admits a color-alternating permutation.
\end{lemma}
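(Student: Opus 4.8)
The plan is to prove the contrapositive-friendly constructive statement directly: assuming $\delta \leq 1$, I will exhibit a color-alternating permutation of $S$ by a greedy placement argument. First I would observe that $\delta \leq 1$ means that for \emph{every} color $q$, $|S_q| - (|S| - |S_q|) \leq 1$, i.e.\ $|S_q| \leq \lceil |S|/2 \rceil$; in particular the most frequent color $q^*$ has at most $\lceil |S|/2 \rceil$ items. So it suffices to show: whenever no color class exceeds $\lceil |S|/2 \rceil$, the multiset of colors of $S$ can be linearly arranged with no two equal colors adjacent. Note the lengths $l_u$ play no role here beyond the hypothesis $\sum_{u\in S} l_u \le L$ that is already baked into the statement of the enclosing theorem — the claim is purely about the color multiset.

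The key steps, in order. (1) Induct on $|S|$; the base cases $|S| \le 1$ are trivial, and $|S| = 2$ holds since $|S_{q^*}| \le \lceil 2/2\rceil = 1$ forces the two items to have distinct colors. (2) For the inductive step with $n = |S| \ge 3$: let $q^*$ be a color of maximum frequency and let $r$ be a color of \emph{second}-highest frequency (possibly $0$ if only one color is present, but then $|S_{q^*}| = n \le \lceil n/2 \rceil$ forces $n \le 1$, contradiction, so at least two colors occur). Remove one $q^*$-item and one $r$-item to form $S'$ with $|S'| = n-2$. (3) Verify the hypothesis is preserved: I must check that in $S'$ no color class exceeds $\lceil (n-2)/2 \rceil = \lceil n/2\rceil - 1$. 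The only class that could be problematic is $q^*$ itself, which now has $|S_{q^*}| - 1 \le \lceil n/2\rceil - 1$ as required; every other class $p \ne q^*$ had $|S_p| \le |S_{q^*}| \le \lceil n/2 \rceil$ and, more to the point, $|S_p| \le |S_{r}| \le |S_{q^*}|$ doesn't immediately give the bound — so here I instead argue $|S_p| \le \lceil n/2\rceil - 1$ for $p \ne q^*$ because if some $p\ne q^*$ had $|S_p| = \lceil n/2\rceil$ then both $q^*$ and $p$ would have $\lceil n/2\rceil$ items, totalling $2\lceil n/2\rceil \ge n$ items all of just those two colors, which for $n \ge 3$ forces $n$ even and $S$ to consist \emph{only} of colors $q^*$ and $p$ in equal amounts; that sub-case is handled separately by the explicit alternating arrangement $q^* p q^* p \cdots$. (4) Apply induction to get a color-alternating permutation $\sigma'$ of $S'$, then insert the removed $q^*$-item and $r$-item. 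The cleanest insertion: prepend the $q^*$-item and append the $r$-item, i.e.\ form $q^*, \sigma', r$ — this is valid provided $\sigma'$ does not start with $q^*$ nor end with $r$, which I can ensure by choosing which end of $\sigma'$ to attach to, or by a small local swap; handling this adjacency bookkeeping cleanly is the one place that needs care.

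The main obstacle I anticipate is exactly that last adjacency bookkeeping in step (4): naively inserting the two removed items back into $\sigma'$ can create a same-color adjacency at the seam, and the induction hypothesis as stated gives us \emph{a} valid permutation of $S'$ but no control over its endpoints. The fix is to strengthen the induction hypothesis — prove instead the statement "if no color occurs more than $\lceil n/2 \rceil$ times then $S$ admits a color-alternating permutation, and moreover one can prescribe that it not begin with any \emph{given} over-represented color" — or, more simply, to sidestep the issue with the standard \emph{fill-the-slots} argument: lay out the $|S_{q^*}|$ copies of $q^*$ first, which since $|S_{q^*}| \le \lceil n/2\rceil$ creates at least $|S_{q^*}| - 1$ gaps between them plus two ends, i.e.\ enough non-adjacent "slots" to distribute the remaining $n - |S_{q^*}| \ge |S_{q^*}| - 1$ items one per gap (filling interior gaps first), so that no two $q^*$-items end up adjacent; then recurse on the arrangement problem among the non-$q^*$-items within the concatenated slot structure, or simply observe that the remaining colors each have frequency $\le |S_{q^*}|$ and a round-robin distribution over the gaps never places two equal colors in the same gap. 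I would present whichever of these two routes the authors find shorter — I lean toward the slot-filling argument as it avoids the strengthened induction hypothesis entirely.
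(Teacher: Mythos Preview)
Your preferred slot-filling route is essentially the paper's proof. The paper presents it as an explicit procedure \proc{Alternate}: it creates $k = |S_{q^*}|$ sequences, seeds each with one $q^*$-item, then distributes the remaining items of colors $2,\dots,Q$ in round-robin fashion across these $k$ sequences and concatenates them. The correctness argument is exactly yours: since every other color has at most $k$ items, the round-robin places at most one item of each color per sequence, so no same-color adjacency occurs within a sequence; and since every sequence begins with a $q^*$-item and (except possibly the last) ends with a non-$q^*$-item, concatenation creates no bad seam.

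Your inductive route is a genuinely different alternative, and you correctly flag its weak point: without strengthening the hypothesis to control the endpoints of $\sigma'$, the re-insertion in step~(4) is awkward. The slot-filling argument avoids this entirely, which is why both you and the paper prefer it.
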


\begin{proof}
  For this, we give the algorithm \proc{Alternate} that receives $S\subseteq \I'$ such that $\delta \leq 1$ and returns a color-alternating permutation of $S$. We assume that the input set is partitioned by colors and can be accessed by color index. Also, we assume, without loss of generality, the items are non-decreasing ordered by color frequency.

  \begin{codebox}
      \Procname{$\proc{Alternate}(S,Q)$}
      \li Let $\mathcal{L}$ be a list of $|S|$ initially empty sequences
      \li $k \gets 0$
      \li \For $u \in S_1$\label{alg:firstloopb}
      \li     \Do
                  $k \gets k + 1$            \>\>\>\>\> \Comment{\scriptsize Counts amount of open sequences} 
      \li         $\mathcal{L}_k \gets \mathcal{L}_k \frown (u)$ \>\>\>\>\> \Comment{\scriptsize Operator $\frown$ means concatenation at the end of the sequence}
              \End\label{alg:firstloope}
      \li $j \gets 1$
      \li \For $q \in \{2,\dots,Q\}$\label{alg:secondloopb}
      \li     \Do
                  \For $u \in S_q$
      \li             \Do
                          $\mathcal{L}_j \gets \mathcal{L}_j \frown (u)$
                          \li                  \If $j == k$     \>\>\>\> \Comment{\scriptsize Places one item in each of the \id{k} sequences before returning to the first one}
      \li                     \Then 
                                  $j \gets 1$
      \li                     \Else
                                  $j \gets j + 1$
                              \End
                      \End 
              \End\label{alg:secondloope}
      %\li \Comment{Put sequences together to return}
      %\li $j \gets 0$, $p_j \gets \const{nil} \quad \forall j \in [|S|]$
      %\li \For $q \in [k]$ \label{alg:finalloopb}
      %\li     \Do
      %            \For $u \in \mathcal{L}_q$
      %\li             \Do
      %                    $j \gets j+1$
      %\li                 $p_j \gets u$
      %                \End

      %                \End \label{alg:finalloope}
      %\li \Return $p$
      \li \Return $\mathcal{L}_1 \frown \dots \frown \mathcal{L}_k$
  \end{codebox}

  First, we argue that the color which has the most items in $S$ is color $q^*$. For this, consider a color $q$ with $\delta_q$ and color $q'$ with $\delta_{q'}$. If $\delta_q > \delta_{q'}$ then 
  \begin{align*}
                   %\delta_q > \delta_{q'} \implies
      |S_q| - (|S| - |S_q|) > |S_{q'}| - (|S| - |S_{q'}|) \implies
               2|S_q| - |S| > 2|S_{q'}| - |S| \implies
                    |S_{q}| > |S_{q'}|.
  \end{align*}
  The first inequality follows from the fact that $\delta_q$ can be computed as the number of items from color $q$ minus the number of items from the other colors, which can be stated as~$|S| - |S_q|$. With this, we have that the colors are ordered by their discrepancies in the input. 
  
  Now we show that algorithm \proc{Alternate} is correct. In the first loop, lines~\ref{alg:firstloopb}-\ref{alg:firstloope}, each item from the first color is placed in a separate sequence, and variable \id{k} counts the number of sequences used. Notice that, since $\delta \leq 1$, there are enough remaining items to put in each of the \id{k} sequences except, possibly, the last one, which in turn would make the concatenation of these sequences a valid color-alternating permutation. 

  However, it would still be possible that there are more items from the following colors, so we alternate the items into $k$ sequences and return to the first sequence after an item is placed in the $k$-th one.  The main loop, in lines~\ref{alg:secondloopb}-\ref{alg:secondloope}, uses this method to place all items in one of the sequences. Now suppose that, with this method, one item is followed by another one from the same color $q$ in a single sequence, then there must be more than \id{k} items from color $q$, which means $\delta_q > \delta$ resulting in a contradiction. Thus, with this method, there will be no item followed by another of the same color in a single sequence.  
  
  Since each sequence only has one item from each color, and all sequences start with an item of color $1$ but end in an item with a different color (except, possibly the last one), the algorithm returns a concatenation of the sequences in order, which must be a valid color-alternating permutation of $S$.
\end{proof}

\section{Pseudo Polynomial Models\label{sec-pseudo}}
In this section, we propose two new pseudo-polynomial formulations for the CBPP\@. Before describing our contributions, we remind the reader about the arc flow formulation of~\citet{carvalho99} for the BPP\@. The concepts of this formulation are crucial to the understanding of the models we propose in this section.

In the arc flow model, we consider the directed graph $G = (V, A)$ as follows: $V$ consists of all possible packing points of a bin, i.e. $V = \{0, 1, \dots, L\}$; $A_{item}$ is the set of item-arcs, which connect every $i$ and $j$ such that there exists an item of length $j-i$, i.e.~${A_{item} = \{(i,j)\colon j - i = l_u}$ for~${0 \leq i < j \leq L}$, and $u \in \I \}$; $A_{loss}$ is the set of loss-arcs that connect every pair of adjacent vertices, i.e.\ $A_{loss} = \{(k,k+1)$ for $0 \leq k \leq L-1\}$; and $A = A_{item} \cup A_{loss}$. A path from $0$ to $L$ in $G$ corresponds to a valid packing pattern for an instance of the BPP, in which the presence of an item-arc $(i,j)$ in the path represents the packing of an item of length $j-i$, and the presence of loss-arcs represent empty spaces left in the bin. With this in mind, the BPP can be modeled as the problem of finding a set of paths in $G$ with the smallest cardinality that covers the demands for all items in the instance, which is achieved with the arc flow integer programming formulation of~\citet{carvalho99}.

\subsection{Multilayered Arc Flow}\label{sec-pseudo-mlaf}
A straightforward adaptation of the arc flow model for the CBPP is presented in this section. The idea is based on the construction of a graph in which the vertices are all the packing points repeated for each color. We can then add arcs that only connect vertices of different colors, thus enforcing that any paths must alternate colors.

Consider the digraph $G(V,A)$ as follows:
\begin{itemize}
	\item The set of vertices $V$ is partitioned as $\{V_0, V_1, V_2, \dots, V_Q\}$:
	      \begin{itemize}
		      \item $V_0$ is the set with a single source vertex $(0,0)$,
		      \item $V_q = \{(1,q), (2,q) \dots , (L,q)\}$ is the set of packing points for color $q$;
	      \end{itemize}
	\item The set of arcs $A$ is partitioned as $\{A_{source}, A_{item}, A_{loss}\}$:
	      \begin{itemize}
		      \item $A_{source} = \{\big((0,0),(j,q)\big): (j,q) \in V$ and $\exists u \in \mathcal{I}: l_u = j, c_u = q\}$,
		      \item $A_{item} = \{\big((i,q),(j,q')\big): (i,q) \in V, (j,q') \in V_{q'}$, $q \neq q'$ and  $\exists u \in \mathcal{I}: l_u = j-i, c_u = q'\}$,
		      \item $A_{loss} = \{\big((i,q),(L,q)\big) : (i,q),(L,q) \in V$ for any $q \in [Q]$  and $1 \leq i \leq L-1\}$.
	      \end{itemize}
\end{itemize}

With this, each color has $L$ vertices, resulting in $|V| = L Q + 1$ and $|A| = O({(LQ)}^2)$. An item-arc connects a vertex $(i,q) \in V$ to $(j,q') \in V$ where $q\neq q'$ and there is an item in~$\I$ with color $q'$ and length $j-i$, while a loss-arc connects any vertex in $V_q$ to the sink vertex of that color $(L,q)$ for any $q \in Q$. Hence, departing from any vertex, all arcs emanating from it either arrive at a vertex of a different color from the one it emanates from or arrive at the sink of that color.

Like with the arc flow formulation, we can model the CBPP as the problem of finding the smallest number of paths from $0$ to $L$ in $G$ such that the demand for each item is attended to equality. Formulation $\mlaf$ described below is an adaptation of the arc flow model of~\citet{carvalho99} to work with the graph and notation introduced in this section. In it, $x$ is a vector of integer variables that indicates the amount of flow passing through each arc, and~$z$ is the integer variable indicating the cumulative amount of flow passing through all arcs that emanate from the source vertex $V_0$.
	{
		\begin{alignat}{4}
			\quad \mlaf =\, & \omit\rlap{$\displaystyle \min z$} \label{obj:mlaf}                                                                                                                                                                                                                                                  \\
			                & \mbox{s.t.}                                         &  & \quad & \sum_{((i,q'), (j,q)) \in A} x_{iq'jq} - \sum_{((j,q), (k,q'')) \in A} x_{jqkq''} & = \begin{cases} -z\\0\\z\end{cases} &  & \begin{array}{ll} \mbox{for } j=0, q = 0 \\ \mbox{for } j\in[L-1], q \in [Q]\\ \mbox{for } j=L, q \in [Q]
			                                                                                                                                                                                                              \end{array}\label{constr:mlaf:flow} \\
			                &                                                     &  &       & \sum_{\substack{((i,q'), (j,q)) \in A                                                                                                                                                                                              \\ j=i+l_u, q = c_u, q' \neq q}} x_{iq'jq}                         & = d_u                               &  & \text{ for } u \in \mathcal{I}\label{constr:mlaf:demand}                                                           \\
			                &                                                     &  &       & z,x_{iq'jq}                                                                     & \in \mathbb{Z}^+                    &  & \phantom{l}\forall\, ((i,q'), (j,q)) \in A\label{constr:mlaf:integrality},
		\end{alignat}
	}

The objective~\eqref{obj:mlaf} is to minimize the number of paths (or patterns) used.  Constraints~\eqref{constr:mlaf:flow} guarantee the conservation of flow in all vertices of $G$, except in the source $(0,0)$ and sinks~$(L,q)$ for any $q \in [Q]$.  Constraints~\eqref{constr:mlaf:demand} ensure that the demands be satisfied to the equality for every item in $\I$. Finally, Constraints~\eqref{constr:mlaf:integrality} are the integrality constraints.

Since there are no arcs in $A$ that connect vertices of the same color, any path must intercalate vertices of different colors by using arcs representing items of different colors.  The model will find the smallest set of paths that satisfy the demands to equality, which will also be an optimal solution for the CBPP since all valid patterns can be represented as a path in~$G$.

\subsection{Color-Alternating Arc Flow}\label{sec-pseudo-caaf}

Although formulation $\mlaf$ correctly models the CBPP, it requires a large graph with up to $LQ + 1$ vertices and $O({(LQ)}^2)$ arcs. Because of this, we propose an alternative model that requires at most $L + 1$ vertices and $O(L^2Q)$ arcs, by using a multi-graph instead. We describe this model in this section.

Consider a directed multi-graph $G(V,A)$ in which arcs are labeled with a color, that is, an arc is a tuple $(i,j,q)$ with $i$ and $j$ being its start and end points, and $q$ its color. We consider the additional color $Q + 1$ to indicate the loss-arcs. The graph is defined as follows:
\begin{itemize}
	\item The set of vertices $V$ is the packing points $\{0,1,\dots,L\}$;
	\item The set of arcs $A$ is partitioned as  $\{A_1, A_2, \dots, A_Q, A_{Q+1} \}$:
	      \begin{itemize}
		      \item $A_q = \{(i,j,q): 0 \leq i \leq j \leq L, \exists u \in \mathcal{I}: l_u = j-i, c_u = q\}$ for $q \in [Q]$, and
		      \item $A_{Q+1} = \{(i,L,Q+1): 1 \leq i \leq L-1\}$.
	      \end{itemize}
\end{itemize}

With this, the number of vertices is $L+1$ and there is an item-arc of color $q \in [Q]$ between vertices $i \in V$ and $j \in V$ if there is an item of color $q$ and length $j-i$ in~$\I$. There is also a loss-arc connecting each vertex in $V\setminus\{0,L\}$ to $L$. However, to guarantee the color constraints in this graph we must specify that two arcs from the same color cannot be used consecutively in any path. For this, we propose formulation $\caaf$, in which $x$ is a vector of integer variables that indicate the amount of flow going through each arc and $z$ is the integer variable that indicates cumulative the amount of flow passing through all arcs that emanate from the source vertex $0$.
	{\small
		\begin{alignat}{4}
			\quad \caaf = & \omit\rlap{ $\min$ $\displaystyle z$} \label{obj:caaf}                                                                                                                                                                                                                                                         \\
			              & \mbox{s.t.}                                         &  & \quad & \sum_{(i,j,q) \in A} x_{ijq} - \sum_{(j,k,q') \in A} x_{jkq'}    & = \begin{cases} -z\\0\\z\end{cases} &  & \begin{array}{ll} \mbox{for } j=0,\\ \mbox{for } j=1,\dots,L-1,\\ \mbox{for } j=L, \end{array}\label{constr:caaf:flow-conserv} \\
			              &                                                     &  & \quad & \sum_{(i,j,q) \in A} x_{ijq} - \sum_{\substack{(j,k,q') \in A:                                                                                                                                                                           \\ q'\in [Q+1]\setminus \{q\}}} x_{jkq'} & \leq 0                              &  & \text{ for } j=1,\dots,L-1, q \in [Q]\label{constr:caaf:color-alt} \\
			              &                                                     &  &       & \sum_{\substack{(i,j,q) \in A                                                                                                                                                                                                            \\ j=i+l_u,c_u=q}} x_{ijq}                             & = d_u                               &  & \text{ for } u \in \mathcal{I}_q, q \in [Q]\label{constr:caaf:demand} \\
			              &                                                     &  &       & z,x_{ijq}                                                        & \in \mathbb{Z}^+                    &  & \phantom{l}\forall\, (i,j,q) \in A, q \in [Q]\label{constr:caaf:integrality}
		\end{alignat}
	}

The objective~\eqref{obj:caaf} is to minimize the number of paths used. Constraints~\eqref{constr:caaf:flow-conserv} guarantee the conservation of flow in all vertices of $V$. Constraints~\eqref{constr:caaf:color-alt} ensure that the amount of flow passing through arcs of color $q\in[Q]$ that arrive in a vertex $j\in V$ cannot be greater than the amount of flow emanating from $j$ through arcs of any color other than $q$. Constraints~\eqref{constr:caaf:demand} ensure that the demands be satisfied to equality for every item in $\I$. Finally, Constraints~\eqref{constr:caaf:integrality} are the integrality constraints.

We claim that Constraints~\eqref{constr:caaf:color-alt} guarantee the color constraints of the CBPP\@. The idea is that if they are satisfied for a vertex $j\in V$, then there must exist a decomposition of the solution into paths such that any path that passes through $j$ arrives from an arc of a different color than the one it leaves from. In the remainder of this section, we show that, for each integer point in the polytope defined by $\caaf$, there exists a feasible solution to the CBPP associated with it. We enunciate Lemmas~\ref{lemma:tightness} and~\ref{lemma:non-blocked}, which help achieve this result with Lemma~\ref{lemma:decompose}. Furthermore, we show that any CBPP solution can be mapped into an integer point of $\caaf$ with the same objective value, in Lemma~\ref{lemma:decompose-back}.

\begin{lemma}\label{lemma:tightness}
	If any feasible solution $(\bar x, \bar z)$ of $\caaf$ uses arcs from at least two different colors, say $a$ and $b$, and satisfy Constraints~\eqref{constr:caaf:color-alt} to equality for both these colors at vertex ${j \in V \setminus \{0,L\}}$, then the only arcs $(i,j,q) \in A$ with $\bar x_{ijq} > 0$ have $q \in \{a,b\}$.
\end{lemma}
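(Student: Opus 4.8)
The plan is to work entirely at the single vertex $j$ and reduce the statement to a short flow-balance count. First I would fix notation: for each color $q \in [Q+1]$, let $\mathrm{in}_q(j) = \sum_{(i,j,q)\in A} \bar x_{ijq}$ be the amount of flow entering $j$ along arcs of color $q$, and $\mathrm{out}_q(j) = \sum_{(j,k,q)\in A} \bar x_{jkq}$ the amount leaving $j$ along arcs of color $q$; write $\mathrm{IN}(j) = \sum_{q\in[Q+1]} \mathrm{in}_q(j)$ and $\mathrm{OUT}(j) = \sum_{q\in[Q+1]} \mathrm{out}_q(j)$. Since $j \in V\setminus\{0,L\}$ and every loss-arc (color $Q+1$) ends at $L$, no color-$(Q+1)$ arc enters $j$, so $\mathrm{in}_{Q+1}(j) = 0$ and hence $\mathrm{IN}(j) = \sum_{q\in[Q]} \mathrm{in}_q(j)$. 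The goal becomes: show $\mathrm{in}_q(j) = 0$ for every $q \in [Q]\setminus\{a,b\}$, since an arc $(i,j,q)\in A$ with $\bar x_{ijq} > 0$ is exactly an arc of color $q$ entering $j$.

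Next I would translate the two hypotheses into equalities. Flow conservation, Constraint~\eqref{constr:caaf:flow-conserv} at $j$, gives $\mathrm{IN}(j) = \mathrm{OUT}(j)$. Constraint~\eqref{constr:caaf:color-alt} being tight for color $a$ at $j$ reads $\mathrm{in}_a(j) = \sum_{q'\in[Q+1]\setminus\{a\}} \mathrm{out}_{q'}(j) = \mathrm{OUT}(j) - \mathrm{out}_a(j)$, and likewise $\mathrm{in}_b(j) = \mathrm{OUT}(j) - \mathrm{out}_b(j)$ for color $b$. Adding these yields $\mathrm{in}_a(j) + \mathrm{in}_b(j) = 2\,\mathrm{OUT}(j) - \mathrm{out}_a(j) - \mathrm{out}_b(j)$.

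Then I would invoke $a \neq b$ together with nonnegativity of the flow variables: because $a$ and $b$ are distinct colors, $\mathrm{out}_a(j) + \mathrm{out}_b(j)$ is a partial sum of the nonnegative terms making up $\mathrm{OUT}(j)$, so $\mathrm{out}_a(j) + \mathrm{out}_b(j) \le \mathrm{OUT}(j)$. Substituting into the previous identity gives $\mathrm{in}_a(j) + \mathrm{in}_b(j) \ge \mathrm{OUT}(j) = \mathrm{IN}(j) = \sum_{q\in[Q]} \mathrm{in}_q(j)$. Rearranging, $\sum_{q\in[Q]\setminus\{a,b\}} \mathrm{in}_q(j) \le 0$, and since every summand is nonnegative, each such $\mathrm{in}_q(j)$ equals $0$, which is the claim.

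The argument is short, and the points needing care are bookkeeping rather than genuine obstacles: (i) observing that loss-arcs never enter an interior vertex, so color $Q+1$ contributes nothing on the inflow side and $\mathrm{IN}(j)$ is supported on $[Q]$; (ii) using $a \neq b$ precisely where we bound $\mathrm{out}_a(j) + \mathrm{out}_b(j) \le \mathrm{OUT}(j)$ — this is the only place the hypothesis that two distinct colors are in play is needed; and (iii) keeping the direction of Constraint~\eqref{constr:caaf:color-alt} straight, so that ``tight'' correctly means $\mathrm{in}_q(j) = \mathrm{OUT}(j) - \mathrm{out}_q(j)$, i.e.\ $\mathrm{in}_q(j) + \mathrm{out}_q(j) = \mathrm{IN}(j)$. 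If one prefers, the trivial sub-case $\mathrm{IN}(j) = 0$ can be dispatched first, but the computation above covers it automatically.
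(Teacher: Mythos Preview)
Your proposal is correct and follows essentially the same approach as the paper: both arguments add the two tightness equalities $\mathrm{in}_a(j)=\mathrm{OUT}(j)-\mathrm{out}_a(j)$ and $\mathrm{in}_b(j)=\mathrm{OUT}(j)-\mathrm{out}_b(j)$, bound $\mathrm{out}_a(j)+\mathrm{out}_b(j)\le \mathrm{OUT}(j)$ by nonnegativity, and compare with flow conservation $\mathrm{IN}(j)=\mathrm{OUT}(j)$ to force the remaining inflows to zero. Your write-up is in fact a bit more careful about bookkeeping (noting explicitly that loss-arcs cannot enter an interior vertex and flagging exactly where $a\neq b$ is used), but the underlying computation is the same.
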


\begin{proof}
	Since, in vertex $j$, $\bar x$ satisfy Constraints~\eqref{constr:caaf:color-alt} to equality for $a$ and $b$ we have that
	\begin{align}
		\sum_{(i,j,a)\in A} \bar x_{ija} = & \sum_{\substack{(j,k,q) \in A \\ q \in [Q + 1]\setminus \{a\}}} \bar x_{jkq} = \sum_{(j,k,q) \in A} \bar x_{jkq} - \sum_{(j,k,a) \in A} \bar x_{jka}  \label{eq:1}, \\
		\sum_{(i,j,b)\in A} \bar x_{ijb} = & \sum_{\substack{(j,k,q) \in A \\ q \in [Q + 1]\setminus\{b\}}} \bar x_{jkq} = \sum_{(j,k,q) \in A} \bar x_{jkq} - \sum_{(j,k,b) \in A} \bar x_{jkb}\label{eq:2}.
	\end{align}
	Now, notice that
	\begin{align}
		\sum_{(i,j,q) \in A} \bar x_{ijq} & \geq \sum_{(i,j,a)\in A} \bar x_{ija} + \sum_{(i,j,b)\in A} \bar x_{ijb}                                      \\
		                                  & = \sum_{(j,k,q) \in A} \bar x_{jkq} - \sum_{(j,k,a) \in A} \bar x_{jka} +
		\sum_{(j,k,q) \in A} \bar x_{jkq} - \sum_{(j,k,b) \in A} \bar x_{jkb}                                                                             \\
		                                  & = 2 \sum_{(j,k,q) \in A} \bar x_{jkq} - \sum_{(j,k,a) \in A} \bar x_{jka} - \sum_{(j,k,b) \in A} \bar x_{jkb} \\
		                                  & \geq 2\sum_{(j,k,q) \in A} \bar x_{jkq} - \sum_{(j,k,q) \in A} \bar x_{jkq}                                   \\
		                                  & =\sum_{(j,k,q) \in A} \bar x_{jkq}.
	\end{align}
	Since any feasible solution must satisfy the flow conservation imposed by Constraints~\eqref{constr:caaf:flow-conserv}, both inequalities above are, in fact, equalities. Thus, there cannot exist arcs arriving at or emanating from $j$ from any color other than $a$ or~$b$ with positive value in $\bar x$.
\end{proof}

\begin{corollary}\label{corollary:tightness}
	For any feasible solution $(\bar x, \bar z)$ of $\caaf$ and vertex $j \in V \setminus \{0,L\}$, the number of colors that satisfy Constraints~\eqref{constr:caaf:color-alt} to equality at~$j$ and $\sum_{(i,j,q) \in A} \bar x_{ijq} > 0$ is at most $2$.
\end{corollary}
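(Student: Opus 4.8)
The plan is to derive the corollary directly from Lemma \ref{lemma:tightness} by a short contradiction argument. Suppose, for the sake of contradiction, that at some vertex $j \in V \setminus \{0,L\}$ there are at least three distinct colors, say $a$, $b$, and $c$, that all satisfy Constraints~\eqref{constr:caaf:color-alt} to equality at $j$, and that moreover $\sum_{(i,j,q) \in A} \bar x_{ijq} > 0$. First I would observe that, since the total incoming flow at $j$ is strictly positive, by the flow conservation Constraints~\eqref{constr:caaf:flow-conserv} (as $j \neq 0, L$) there is also strictly positive outgoing flow at $j$, so $\bar x$ genuinely uses arcs incident to $j$; in particular the solution uses arcs of at least one color, and I will want to argue it uses arcs of at least two of the colors $a, b, c$ so that Lemma~\ref{lemma:tightness} applies.

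The key step is then the following pincer: apply Lemma~\ref{lemma:tightness} to the pair $\{a,b\}$ to conclude that every arc $(i,j,q)\in A$ with $\bar x_{ijq}>0$ has $q \in \{a,b\}$; then apply it again to the pair $\{a,c\}$ (or $\{b,c\}$) to conclude that every such arc has $q \in \{a,c\}$. Intersecting these two conclusions forces every positive-flow arc incident to $j$ to have color $a$ alone (or, depending on which pairs one picks, to have color in $\{a,b\}\cap\{b,c\} = \{b\}$, etc.). But now consider Constraint~\eqref{constr:caaf:color-alt} for the single remaining color — say color $a$ — at vertex $j$: the incoming flow equals $\sum_{(i,j,a)\in A}\bar x_{ija}$, which by what we just showed equals the \emph{entire} incoming flow, i.e.\ $\sum_{(i,j,q)\in A}\bar x_{ijq}$, while by flow conservation this also equals the entire outgoing flow $\sum_{(j,k,q)\in A}\bar x_{jkq}$. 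On the other hand, since color $a$ satisfies~\eqref{constr:caaf:color-alt} to equality, the incoming $a$-flow equals $\sum_{(j,k,q)\in A,\, q\in[Q+1]\setminus\{a\}}\bar x_{jkq}$, the outgoing flow on colors \emph{other} than $a$. Combining, the outgoing $a$-flow plus the outgoing non-$a$-flow equals the outgoing non-$a$-flow, so the outgoing $a$-flow is zero; but then the incoming $a$-flow is zero, so all flow at $j$ is zero, contradicting $\sum_{(i,j,q)\in A}\bar x_{ijq}>0$.

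One subtlety I need to handle carefully before invoking Lemma~\ref{lemma:tightness}: the lemma as stated assumes the solution "uses arcs from at least two different colors" globally, and it is really the local positivity at $j$ that matters — I should check that the lemma's proof only uses positivity of the $a$- and $b$-arcs at $j$ (it does, via the inequality chain), or else phrase the contradiction so that among $a,b,c$ at least two carry positive flow into $j$. Since the three colors all satisfy~\eqref{constr:caaf:color-alt} tightly and the total incoming flow at $j$ is positive, at least one color among them carries positive incoming flow; if only one did, that single color would make the same "outgoing $a$-flow is zero" computation above go through immediately and we would be done without even needing the lemma. So the main obstacle is purely bookkeeping — ensuring the hypotheses of Lemma~\ref{lemma:tightness} are met in every case — rather than any substantive new idea; the arithmetic itself is the same telescoping used in the lemma's proof.
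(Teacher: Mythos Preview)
Your argument is correct, but you have read the corollary more strongly than the paper intends and consequently worked harder than necessary. The paper offers no proof at all; it is meant to be a one-line consequence of Lemma~\ref{lemma:tightness}. The condition $\sum_{(i,j,q)\in A}\bar x_{ijq}>0$ should be parsed with $q$ fixed (exactly as on the left-hand side of Constraint~\eqref{constr:caaf:color-alt}): the corollary is counting colors $q$ that are tight at $j$ \emph{and} have strictly positive incoming $q$-flow. Under that reading, if $a,b,c$ were three such colors then the solution certainly uses arcs of colors $a$ and $b$, so Lemma~\ref{lemma:tightness} applied to the pair $\{a,b\}$ forces every arc $(i,j,q)$ with $\bar x_{ijq}>0$ to have $q\in\{a,b\}$, immediately contradicting the positive incoming $c$-flow. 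Your interpretation (total incoming flow positive, counting all tight colors regardless of their own incoming flow) yields a genuinely stronger statement; your pincer with two applications of the lemma plus the final flow accounting does establish it, and the subtlety you flag about the lemma's ``uses two colors'' hypothesis is correctly handled by observing that its proof relies only on the tightness equations and flow conservation. One small cleanup: the step ``outgoing $a$-flow is zero, hence incoming $a$-flow is zero'' is not a direct implication; you should instead use that the intersection also forces all \emph{outgoing} positive flow at $j$ to have color $a$ (this is established in the lemma's proof though not in its statement), whence the outgoing non-$a$-flow is zero, and then tightness of $a$ gives incoming $a$-flow equal to zero.
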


We show that if a solution $(\bar x, \bar z)$ is feasible, then there must exist a way to extract one \textit{color-alternating} path from $0$ to $L$ from $(\bar x, \bar z)$ and keep the remaining solution feasible. Color-alternating paths are those in which no arcs of the same color appear consecutively. The following definition will help us understand when the removal of a specific pair of arcs from $(\bar x, \bar z)$ will result in an infeasible solution.

\begin{definition}
	Given a feasible solution $(\bar x, \bar z)$ of $\caaf$, a vertex $j \in V\setminus \{0,L\}$, and an arc $(i,j,a)$, an arc $(j, k, b)$ is \textit{blocked} for $(i,j,a)$ if $a = b$ or there is a color $q \notin\{a, b\}$ such that \[\sum_{(i',j,q) \in A} \bar x_{i'jq} = \sum_{\substack{(j,k',q') \in A\\q' \in [Q + 1] \setminus\{q\}}} \bar x_{jk'q'}.\]
\end{definition}

\begin{lemma}\label{lemma:non-blocked}
	Given a feasible solution $(\bar x, \bar z)$ of $\caaf$, a vertex $j \in V\setminus \{0,L\}$, and an arc $(i,j,a)$ such that $\bar x_{ija} > 0$, there exists at least one unblocked arc $(j,k,b)$ with $\bar x_{jkb} > 0$.
\end{lemma}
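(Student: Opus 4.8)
The goal is to find, for a given arc $(i,j,a)$ carrying positive flow into $j$, an arc $(j,k,b)$ carrying positive flow out of $j$ that is not blocked. I would argue by contradiction: suppose every out-arc $(j,k,b)$ with $\bar x_{jkb} > 0$ is blocked for $(i,j,a)$. The first observation is that the out-arcs of color $a$ (if any) are trivially blocked, so I want to understand the blocking caused by a \emph{third} color. For an out-arc of color $b \neq a$ to be blocked, there must be some color $q \notin \{a,b\}$ at which Constraint~\eqref{constr:caaf:color-alt} holds with equality at $j$ and, moreover, $q$ is "responsible" for the blocking in the sense of the displayed equation in the definition.

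**Key steps.**
First I would note that since $\bar x_{ija} > 0$, by flow conservation \eqref{constr:caaf:flow-conserv} there is at least one out-arc at $j$ with positive flow; pick any such out-arc $(j,k,b)$ and, by assumption, it is blocked. If $b \neq a$, this yields a "witness" color $q_1 \notin \{a,b\}$ satisfying Constraint~\eqref{constr:caaf:color-alt} to equality at $j$. So already there is a color (namely $a$, via the arc $(i,j,a)$ itself, together with $q_1$) such that two colors satisfy \eqref{constr:caaf:color-alt} to equality at $j$ with positive in-flow — actually one must be careful: I need to verify Constraint~\eqref{constr:caaf:color-alt} holds to equality for color $a$ as well, or find two colors that do. The cleaner route: apply Corollary~\ref{corollary:tightness}. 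If two colors satisfy \eqref{constr:caaf:color-alt} to equality at $j$ with positive in-flow, then by Lemma~\ref{lemma:tightness} the only in-arcs at $j$ are of those two colors; call them $a$ and $c$ (note $a$ must be one of them since $\bar x_{ija}>0$). Then \emph{every} out-arc of color different from both $a$ and $c$ is forbidden to carry flow — but wait, that is about in-flow, so I instead use Lemma~\ref{lemma:tightness}'s conclusion that the in-flow is entirely colors $\{a,c\}$, and combine with the equalities \eqref{eq:1}–\eqref{eq:2} to pin down the out-flow structure: the out-flow of color $a$ equals (total out-flow) minus (out-flow through color $a$)... which forces a relation I can exploit.

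**Finishing the contradiction.**
The cleanest line is probably: if all positive out-arcs are blocked, then in particular for each color $b$ appearing on a positive out-arc with $b \neq a$, there is a witness color $q_b \notin \{a,b\}$ with \eqref{constr:caaf:color-alt} tight at $j$ and the blocking equality holding. Since the blocking equality for witness $q$ reads "in-flow of color $q$ $=$ out-flow not of color $q$", and \eqref{constr:caaf:color-alt} for $q$ reads "in-flow of color $q$ $\leq$ out-flow not of color $q$", tightness of \eqref{constr:caaf:color-alt} for $q$ is \emph{equivalent} to the blocking equality for $q$ whenever the out-arc $(j,k,b)$ has $b \neq q$; so "blocked by $q$" just means "$q$ is tight at $j$ and $q \neq b$". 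Thus the assumption becomes: for every positive out-arc color $b$, either $b = a$, or some tight color $q \neq a$, $q\neq b$ exists. Now count tight colors with positive in-flow: by Corollary~\ref{corollary:tightness} there are at most $2$, and $a$ need not be among them. If there is a tight color $q^\dagger \neq a$, then $a$ plus $q^\dagger$... I'd then show there must exist a positive out-arc whose color is neither $a$ nor $q^\dagger$ nor any other tight color — using that the total out-flow is positive and Lemma~\ref{lemma:tightness} restricts out-flow colors — giving an unblocked arc, the contradiction.

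**Main obstacle.**
The delicate part is bookkeeping the interplay between "tight for color $q$" (a statement about \eqref{constr:caaf:color-alt}) and "blocked by $q$" (a statement involving a specific out-arc $(j,k,b)$), and making sure the at-most-two-colors bound from Corollary~\ref{corollary:tightness} is applied to the right set (tight colors with positive \emph{in}-flow at $j$) while the arc we are extracting concerns \emph{out}-flow. I expect to need to invoke Lemma~\ref{lemma:tightness} to transfer the in-flow color restriction to a usable statement, and then a short flow-conservation/counting argument to produce an out-arc of a "fresh" color. Care is also needed at the boundary: the lemma excludes $j \in \{0,L\}$, which is exactly what makes \eqref{constr:caaf:color-alt} available at $j$, so no special-casing is needed there.
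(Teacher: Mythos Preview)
Your overall framework matches the paper's: reduce to a case analysis on the number of colors for which Constraint~\eqref{constr:caaf:color-alt} is tight at $j$, using Corollary~\ref{corollary:tightness} to bound this number by~$2$. Your key observation --- that ``$(j,k,b)$ is blocked by $q$'' for $b \neq a$ is exactly ``$q$ is tight at $j$ and $q \notin \{a,b\}$'' --- is correct and is the heart of the argument.

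However, your finishing step has the logic reversed. You propose to exhibit a positive out-arc whose color $b$ is \emph{neither $a$ nor any tight color}, and claim this arc is unblocked. But such an arc is precisely the one that \emph{is} blocked: if $q^\dagger \neq a$ is tight and $b \notin \{a, q^\dagger\}$, then $q^\dagger \notin \{a,b\}$ witnesses the blocking of $(j,k,b)$. An out-arc $(j,k,b)$ is unblocked exactly when $b \neq a$ and every tight color lies in $\{a,b\}$, so what you actually need is an out-arc whose color $b$ \emph{equals} the tight color that is not $a$. Concretely: if the unique tight color is $c \neq a$, you must produce a positive out-arc of color $c$; if there are two tight colors $c,c'$, Lemma~\ref{lemma:tightness} forces $a \in \{c,c'\}$ (say $c = a$) and you must produce a positive out-arc of color $c'$.

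The existence of that out-arc is not free from Lemma~\ref{lemma:tightness} alone; it requires a short flow-balance computation. For instance, in the one-tight-color case with $c \neq a$, combine flow conservation~\eqref{constr:caaf:flow-conserv} with tightness of~\eqref{constr:caaf:color-alt} for $c$ to get $\sum_{(i',j,a)} \bar x_{i'ja} \leq \sum_{(j,k',c)} \bar x_{jk'c}$, whence $\bar x_{ija} > 0$ yields a positive out-arc of color $c$. The paper carries out exactly these computations case by case; your sketch, as written, would not reach the contradiction.
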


\begin{proof}
	Since $\bar x_{ija} > 0$, then, by Corollary~\ref{corollary:tightness}, the number of colors that satisfy Constraints~\eqref{constr:caaf:color-alt} to equality for vertex $j$ is either~$0$,~$1$, or $2$. Thus, we have the following cases.

	If there are no colors that satisfy Constraints~\eqref{constr:caaf:color-alt} to equality, then, by Constraint~\eqref{constr:caaf:color-alt}, there is an arc $(j,k,b)$ with $b \neq a$ with $\bar x_{jkb} > 0$, which is unblocked.

	In case there is exactly one color $c$ that satisfies Constraint~\eqref{constr:caaf:color-alt} to equality,  if $c=a$, then any arc $(j, k, b)$ with $b \neq a$ is unblocked and, by Constraint~\eqref{constr:caaf:color-alt}, there must be such an arc with $\bar x_{jkb} > 0$. And, if $c \neq a$, then,  by Constraint~\eqref{constr:caaf:flow-conserv},
	\[
		\sum_{(i', j, a) \in A} \bar x_{i'ja} +
		\sum_{(i', j, c) \in A} \bar x_{i'jc}
		\leq
		\sum_{(i',j,q) \in A} \bar x_{i'jq}
		=
		\sum_{\substack{(j,k',q') \in A\\ q' \in [Q + 1] \setminus \{c\}}} \bar x_{jk'q'} +
		\sum_{(j,k',c) \in A} \bar x_{jk'c},
	\]
	thus as $c$ satisfies Constraint~\eqref{constr:caaf:color-alt} to equality and $\bar x_{ija} > 0$, there is an unblocked arc $(j,k,c)$ such that $\bar x_{jkc} > 0$.

	Lastly, if there are two colors, $c$ and $c'$, that satisfy Constraints~\eqref{constr:caaf:color-alt} to equality, then by Lemma~\ref{lemma:tightness} these are the only colors with arcs of positive flow in $j$. Therefore, without loss of generality, $c = a$ and, by Constraint~\eqref{constr:caaf:flow-conserv},
	\[
		\sum_{(i', j, a) \in A} \bar x_{i'ja} +
		\sum_{(i', j, c') \in A} \bar x_{i'jc'}
		=
		\sum_{(i',j,q) \in A} \bar x_{i'jq}
		=
		\sum_{\substack{(j,k',q') \in A\\ q' \in [Q+1]\setminus\{c\}}} \bar x_{jk'q'} +
		\sum_{(j,k',c') \in A} \bar x_{jk'c'},
	\]
	from which we conclude that there is an unblocked arc $(j,k,c')$ such that $\bar x_{jkc'} > 0$.
\end{proof}

With Lemmas~\ref{lemma:tightness} and~\ref{lemma:non-blocked} in place, we can describe an algorithm to decompose a feasible solution into color-alternating paths. Algorithm \proc{DecomposeAF} takes an integer feasible solution $\bar x$, and the instance graph $G(V,A)$ as input and produces a set of $z$ color-alternating paths.

\begin{codebox}
	\Procname{$\proc{DecomposeAF}(\bar x, \bar z, V, A)$}
	\li \If $\bar z = 0$\label{stop-condition}
	\li     \Then
	\Return $\emptyset$
	\End
	\li $\id{in} \gets $ arbitrary arc $(0,j,a) \in A$ with $\bar x_{0ja} > 0$
	\li $P \gets \{\id{in}\}$
	%\li $\bar x_{0ja} \gets \bar x_{0ja} -1$ \label{decrement:1}
	\li \While $j \neq L$\label{main-loop:begin}
	\li     \Do
	%            \For $q \gets 1$ \To $Q$ \label{degree-loop:begin}
	%\li             \Do
	%                    $\id{in}(q)  \gets $ sum of $\bar x_{jk}$ for all arcs $(i,j) \in A$ from color $q$ 
	%\li                 $\id{out}(q) \gets $ sum of $\bar x_{ij}$ for all arcs $(j,k) \in A$ from color $q$ 
	%                \End \label{degree-loop:end}
	\For each arc $\id{out} = (j,k,b) \in A$ with $\bar x_{jkb}>0$ and $b\neq a$\label{choice-loop:begin}
	\li             \Do
	\For $q \gets 1$ \To $Q$ and $q \neq b$
	\Do
	%\li                         \If $\id{in}(q) = \func{sum}(\id{out}) - \id{out}(q)$ \>\>\>\>\>\> \Comment Notice that $\id{in}(c(a))$ was decremented by 1 \label{block-condition}
	%\li                         \If $\id{in}(q) > \func{sum}(\id{out}) - \id{out}(q) - 1 $ \>\>\>\>\>\> \Comment Notice that $\id{in}(c(a))$ was decremented by 1
	\li                         \If $\sum_{(i,j,q) \in A} \bar x_{ijq} = \sum_{(j,k,q') \in A:q'\neq q} \bar x_{jkq'}$
	%\>\>\>\>\>\> \scriptsize \Comment Note that the flow in $\id{in}$ was decremented  \label{block-condition}
	\li                             \Then
	block arc $\id{out}$ \>\>\>\>\> \scriptsize\Comment Arc $\id{out}$ is blocked by color $q$
	\End
	\End
	\li                     \If arc $\id{out}$ is not blocked
	\li                         \Then
	$P \gets P \cup \{\id{out}\}$
	%\li                             $\bar x_{jkb} \gets \bar x_{jkb} -1$ \label{decrement:2}
	\li                             $\id{in} \gets \id{out}$, $a \gets b$, $j \gets k$
	\li                             \textbf{break}
	\End
	\End\label{choice-loop:end}
	\End\label{main-loop:end}
	\li $\check d \gets \min_{(i,j,q) \in p} \bar x_{ijq}$\label{decrement-step}
	\li \For each arc $(i,j,q) \in P$\label{decrement:begin}
	\li     \Do
	$\bar x_{ijq} \gets \bar x_{ijq} - \check d$
	\li         $\bar z \gets \bar z - \check d$
	\End\label{decrement:end}
	\li \Return $\{(\check d,P)\} \cup \proc{DecomposeAF}(\bar x, \bar z, V, A)$\label{recursive-call}
\end{codebox}

The idea is that with each call of \proc{DecomposeAF} for a  solution $(\bar x, \bar z)$ of $\caaf$, one color-alternating path is removed from $\bar x$ and added to the output while also maintaining feasibility of the remaining solution. Then, by induction, the same algorithm can be repeated to the remaining solution until there are no arcs with positive flow left.

\begin{lemma}\label{lemma:decompose}
	Algorithm \proc{DecomposeAF} decomposes a solution $\bar x$ of the model $\caaf$ into color-alternating paths from $0$ to $L$ in $G(V,A)$ with flow equal to $\bar z$.
\end{lemma}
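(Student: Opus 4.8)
The proof I have in mind is an induction on $\bar z$. For the base case $\bar z = 0$: since every item-arc strictly increases the vertex index ($l_u>0$) and every loss-arc points to $L$, the graph $G(V,A)$ is acyclic, so a flow obeying the conservation constraints~\eqref{constr:caaf:flow-conserv} with zero net out-flow at the source must be identically zero; \proc{DecomposeAF} then returns $\emptyset$, which is the (empty) decomposition of $\bar x \equiv 0$. For the inductive step I assume $\bar z > 0$ and that the statement holds for every $\caaf$-feasible pair whose source out-flow is strictly smaller than $\bar z$.

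The first thing to check in the inductive step is that the \textbf{while} loop of lines~\ref{main-loop:begin}--\ref{main-loop:end} halts having built a genuine path $P$ from $0$ to $L$. Since $G$ is acyclic, the current vertex strictly increases at each iteration (or jumps straight to $L$ via a loss-arc), so at most $L$ iterations occur; and whenever the current vertex is some internal $j$, reached through an arc of positive flow, Lemma~\ref{lemma:non-blocked} supplies an unblocked arc $(j,k,b)$ with $\bar x_{jkb}>0$, so the inner loop of lines~\ref{choice-loop:begin}--\ref{choice-loop:end} always finds a successor and the walk cannot get stuck before $L$. Moreover $P$ is color-alternating: the successor arc is always chosen with color $b\neq a$, and the only arcs of color $Q+1$ are loss-arcs, which end at $L$ and therefore can appear only as the last arc of $P$; hence no two consecutive arcs of $P$ carry the same color.

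The core of the proof is to show that after subtracting $\check d=\min_{(i,j,q)\in P}\bar x_{ijq}$ units of flow along $P$ (lines~\ref{decrement-step}--\ref{decrement:end}) the residual pair $(\bar x',\bar z')$ with $\bar z'=\bar z-\check d$ again satisfies Constraints~\eqref{constr:caaf:flow-conserv}--\eqref{constr:caaf:color-alt}, so that Lemma~\ref{lemma:non-blocked} and the induction hypothesis still apply to it (the demand constraints~\eqref{constr:caaf:demand} are simply replaced by the residual demands and play no role here). Flow conservation is immediate, because subtracting a constant flow along a $0$--$L$ path preserves balance at every internal vertex and lowers the source out-flow and sink in-flow by exactly $\check d=\bar z-\bar z'$; nonnegativity and integrality hold since $\check d\le\bar x_{ijq}$ for every arc of $P$. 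For the color-alternation constraints~\eqref{constr:caaf:color-alt} I would argue vertex by vertex. At an internal vertex $j$ not on $P$ nothing changes. At an internal vertex $j$ on $P$, with incoming arc of color $a$ and outgoing arc of color $b$ (where $a\neq b$), I split on the color $q$ in the constraint: if $q=a$, the left-hand side (the incoming arc) and the right-hand side (the outgoing arc, of color $b\neq a$) each drop by $\check d$, so the inequality is preserved; if $q=b$, neither side moves (the incoming arc has color $a\neq b$, and the outgoing arc has color $b$ so it is excluded from the right-hand sum); and if $q\notin\{a,b\}$, the left-hand side is unchanged while the right-hand side drops by $\check d$. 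Thus in this last case one needs the constraint for color $q$ at $j$ to have had slack at least $\check d$ before the decrement. The \emph{unblocked} property that the inner loop enforces (and that Lemma~\ref{lemma:non-blocked} guarantees is attainable) tells us this constraint was not tight; upgrading ``not tight'' to ``slack $\ge \check d$'' for all such $q$ simultaneously — using the flow-conservation identity at $j$, the bound $\check d\le \bar x_{(i,j,a)}$, and, if needed, Corollary~\ref{corollary:tightness} to limit how many colors are tight at $j$ — is the step I expect to demand the most care, since the definition of ``unblocked'' by itself certifies only a slack of one, so one may have to exploit extra structure at $j$ (or reconsider the amount $\check d$ that is removed).

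Granting that $(\bar x',\bar z')$ is again $\caaf$-feasible, the induction hypothesis applies to the recursive call on line~\ref{recursive-call} (note $\bar z'<\bar z$ since $\check d\ge 1$), returning a decomposition of $\bar x'$ into color-alternating $0$--$L$ paths of total flow $\bar z'$. Adjoining the pair $(\check d,P)$ yields a decomposition of $\bar x$: the per-arc flows add up correctly by construction, the total flow is $\bar z'+\check d=\bar z$, and every path in the collection — including $P$ — is color-alternating. This closes the induction and proves the lemma.
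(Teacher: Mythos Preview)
Your outline mirrors the paper's argument: both proceed by induction on $\bar z$, appeal to Lemma~\ref{lemma:non-blocked} for the existence of an unblocked successor at every internal vertex of the path, and then assert that the residual after subtracting $\check d$ along $P$ remains $\caaf$-feasible so the recursion may continue. The paper dispatches this last point in one sentence (``Since none of the removed pairs of arcs are blocked by any color, the remaining solution \ldots\ is always feasible'') without further justification.

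The step you single out as ``demanding the most care'' is not merely delicate --- it is a genuine gap, and the paper does not close it either. Being \emph{unblocked} only certifies that Constraint~\eqref{constr:caaf:color-alt} for each $q\notin\{a,b\}$ at $j$ is strict, i.e.\ has slack at least~$1$ in the integer case; but removing $\check d$ units from the outgoing color-$b$ arc shrinks the right-hand side of that constraint by~$\check d$, so slack at least~$\check d$ is what is actually required. This can fail: take an internal vertex $j$ with incoming flow $5$ of color $a$ and $1$ of color $c$, and outgoing flow $2$ of color $b$ and $4$ of color $c$. All three instances of~\eqref{constr:caaf:color-alt} at $j$ hold with slacks $1,4,1$ for colors $a,b,c$ respectively, and the color-$b$ outgoing arc is unblocked for the incoming $a$-arc; yet subtracting $\check d=2$ along the pair $(a,b)$ leaves one unit of color-$c$ inflow at $j$ against zero non-$c$ outflow, violating~\eqref{constr:caaf:color-alt}, and two further recursive calls strand an incoming $c$-arc at $j$ with only a $c$-arc available to leave on. The claim is salvaged by taking $\check d=1$ (at the cost of the $O(|A|)$ termination bound the paper states), or --- more in the spirit of the algorithm --- by redefining $\check d$ as the minimum of the arc flows along $P$ \emph{and} of the slacks of~\eqref{constr:caaf:color-alt} for every color $q\notin\{a,b\}$ at every internal vertex of $P$; in either case your proposal, like the paper's own proof, does not yet establish feasibility of the residual when $\check d>1$.
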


\begin{proof}
	Condition in Line~\ref{stop-condition} is the base case, for when there is no positive flow in the solution.

	The main loop, in Lines~\ref{main-loop:begin}-\ref{main-loop:end}, iterates over $V$ starting from the endpoint of the arc chosen in the previous iteration until it reaches $L$. Loop in Lines~\ref{choice-loop:begin}-\ref{choice-loop:end} iterates over all arcs with positive flow and color different from $a$ to find an unblocked arc $\id{out}$ with positive flow. From Lemma~\ref{lemma:non-blocked}, we know that this loop will always find such an arc, which is then added to the current path and the loop continues from its endpoint.
	%Note that condition~\eqref{block-condition} is sufficient to check if color $q$ blocks the pair of arcs $(\id{in},\id{out})$ since \id{in} had its flow value decremented by one (lines~\eqref{decrement:1} and~\eqref{decrement:2}) in a previous iteration. 
	Line~\ref{decrement-step} computes the smallest amount of flow $\check d$, passing through the arcs in the recently formed path. This is the amount of flow that can be removed from all arcs in the path while maintaining the remaining solution feasible and setting at least one variable of the new solution $(\bar x, \bar z)$ to zero. The loop in Lines~\ref{decrement:begin}-\ref{decrement:end} decrements exactly $\check d$ from all variables representing arcs in the recently formed path. At last, in Line~\ref{recursive-call}, the current path and its multiplicity are concatenated with the result of a recursive call of the algorithm on the remaining solution.

	Notice that before the recursive call, the same amount of flow is removed from a set of arcs connecting $0$ to $L$, which in turn keeps Constraints~\eqref{constr:caaf:flow-conserv} satisfied. Since none of the removed pairs of arcs are blocked by any color, the remaining solution before any recursive call to \proc{DecomposeAF} is always feasible. Furthermore, since the amount of flow removed from the arcs in each iteration is guaranteed to set at least one variable to zero, this procedure is guaranteed to eventually stop (in at most $O(|A|)$ steps).

	Notice that, adding $\check d$ for every path $P$ the value obtained is precisely $\bar z$.
\end{proof}

\begin{lemma}\label{lemma:decompose-back}
	Any feasible solution to the CBPP that uses $\bar z$ bins can be represented as a feasible solution of model $\caaf$ with objective value $\bar z$.
\end{lemma}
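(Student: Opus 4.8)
The plan is to take a CBPP solution $\mathcal{B}$ with $|\mathcal{B}| = \bar z$ bins and build, explicitly, an integer point $(\bar x,\bar z)$ in the polytope defined by $\caaf$. For each bin $B \in \mathcal{B}$ we have $\sum_{u\in B} l_u \le L$ and $B$ is a valid part, so Theorem~\ref{theo:definition} guarantees that $B$ admits a color-alternating permutation $\pi_B = (u_1,\dots,u_r)$. I then read $\pi_B$ off as a path in $G(V,A)$: set $p_0 = 0$ and $p_t = p_{t-1} + l_{u_t}$, let the $t$-th item contribute the item-arc $(p_{t-1},p_t,c_{u_t})$, and if the final vertex satisfies $p_r < L$ append the loss-arc $(p_r,L,Q+1)$ (this arc exists, since each part is nonempty, hence $p_r \ge l_{u_1} \ge 1$), while if $p_r = L$ the path already ends at the sink. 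Because $\pi_B$ is color-alternating, consecutive item-arcs of this path carry distinct colors, and the loss-arc color $Q+1$ differs from every $q \in [Q]$; so the path is color-alternating in the earlier sense. Finally set $\bar x_{ijq}$ equal to the number of bins whose associated path uses arc $(i,j,q)$, and $\bar z = |\mathcal{B}|$.

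Next I would check the constraints one at a time. Integrality~\eqref{constr:caaf:integrality} is immediate since $\bar x$ and $\bar z$ are counts. For flow conservation~\eqref{constr:caaf:flow-conserv}, note that every arc of $G$ strictly increases the vertex index (item-arcs because $l_u>0$, loss-arcs because they end at $L$), so each of the $\bar z$ paths is a simple $0$–$L$ path and contributes $-1,0,+1$ to the net balance at $j=0$, at an internal $j$, and at $j=L$ respectively; summing over all paths yields $-\bar z,0,\bar z$. For the demand constraints~\eqref{constr:caaf:demand}, $\mathcal{B}$ meets every demand exactly, so an item $u \in \mathcal{I}_q$ occurs $d_u$ times over all bins, each occurrence contributing $+1$ to exactly one arc $(i,i+l_u,q)$; summing over all arcs $(i,j,q)$ with $j=i+l_u$ gives $d_u$. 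Lastly $\bar z$ equals the number of paths, i.e.\ the number of bins, so the objective value is preserved.

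The step that needs care is the color-alternation constraint~\eqref{constr:caaf:color-alt}. Fix an internal vertex $j$ and a color $q \in [Q]$. The quantity $\sum_{(i,j,q)\in A}\bar x_{ijq}$ counts exactly the paths that \emph{enter} $j$ along a color-$q$ arc. Each such path, being color-alternating and passing through the internal vertex $j$ (which it visits only once), must also \emph{leave} $j$, and it leaves along an arc of some color $q' \ne q$ — either an item-arc of a different color, or the loss-arc of color $Q+1$. Distinct paths may leave $j$ along distinct arcs, but all of those arcs have color $\ne q$; hence the total flow leaving $j$ on arcs of color $\ne q$, namely $\sum_{(j,k,q')\in A,\ q'\ne q}\bar x_{jkq'}$, is at least the number of paths entering $j$ via color $q$, i.e.\ at least $\sum_{(i,j,q)\in A}\bar x_{ijq}$, which is precisely~\eqref{constr:caaf:color-alt}. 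The only genuine subtlety here is the invocation of Theorem~\ref{theo:definition} to obtain a color-alternating permutation of each bin, plus the observation that, translated into a path, such a permutation never places two arcs of the same color consecutively; everything else is bookkeeping.
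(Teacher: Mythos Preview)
Your proposal is correct and follows essentially the same approach as the paper: map each bin to a color-alternating $0$--$L$ path in $G$ by reading off the items in a color-alternating order, append a loss-arc if the bin is not full, superpose the resulting unit flows, and check each constraint family in turn. The only cosmetic difference is that you invoke Theorem~\ref{theo:definition} to obtain the color-alternating permutation, whereas the paper simply observes that a feasible CBPP solution is already, by definition, a collection of packing patterns (i.e.\ color-alternating sequences); your verification of the constraints is also somewhat more explicit than the paper's, but the argument is the same.
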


\begin{proof}
	Consider a feasible solution to the CBPP with objective value $\bar z$. This solution can be described as $\bar z$ packing patterns. We must show that any packing pattern~$P$ can be represented as a color-alternating path from $0$ to $L$. For this, we select the first item~$u$ of~$P$ (respecting the given order) and add one unit of flow to the arc corresponding to that item, i.e.\ the arc of length $l_u$ and color $c_u$ emanating from $0$, and repeat the process starting from vertex $0 + l_u$ with sequence $P \setminus \{u\}$. This procedure stops either when the end of the sequence~$P$ or vertex $L$ are reached, with the former being completed with a loss-arc emanating from the current vertex and arriving in $L$.

	Since $P$ is a packing pattern, at any vertex $j \in V$ there is exactly one unit of flow arriving in $j$ from an arc of color $q$ and one unit of flow emanating from $j$ from an arc of color $q' \in [Q] \setminus \{q\}$, thus Constraints~\eqref{constr:caaf:color-alt} are satisfied to equality for color $q$ in $j$. This procedure is done for each packing pattern in the solution, and the resulting paths are overlapped to form the solution to model $\caaf$. Note that when all paths are overlapped, although Constraints~\eqref{constr:caaf:color-alt} might not be satisfied to equality because the same color $q'$ can be used for different incoming colors, they still hold. Lastly, for each feasible pattern, one unit of flow is added only to a set of arcs that connect $0$ to $L$, so Constraints~\eqref{constr:caaf:flow-conserv} are always satisfied.
\end{proof}

With the results of Lemmas~\ref{lemma:decompose} and~\ref{lemma:decompose-back}, we can finally state the following Theorem.

\begin{theorem}
	The arc flow formulation~$\caaf$ models the CBPP\@.
\end{theorem}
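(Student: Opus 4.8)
The plan is to read ``$\caaf$ models the CBPP'' as the assertion that the optimal value of the integer program $\caaf$ equals the minimum number of bins over all feasible CBPP solutions, and then to obtain this equality directly from the two directions already prepared in Lemmas~\ref{lemma:decompose} and~\ref{lemma:decompose-back}. So the proof will be a short assembly: one inequality from the decomposition algorithm, the other from the explicit mapping, and then a sandwich argument.

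First I would prove $\caaf \ge \mathrm{OPT}_{\mathrm{CBPP}}$. Take an optimal integer solution $(\bar x, \bar z)$ of $\caaf$. By Lemma~\ref{lemma:decompose}, $\proc{DecomposeAF}$ returns a multiset of color-alternating paths from $0$ to $L$ in $G(V,A)$ with total flow $\bar z$; counted with multiplicity this is a family of exactly $\bar z$ such paths. The step that needs a sentence of care is identifying each of these paths with a packing pattern in the sense of Section~\ref{sec-preliminaries}: reading off the item-arcs of a path in order gives a subsequence of $\I'$ whose lengths sum to at most $L$ (the path's vertices are strictly increasing in $\{0,\dots,L\}$, and the at most one loss-arc contributes no item), and consecutive item-arcs carry different colors precisely because the path is color-alternating. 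Aggregating over the whole family, the number of occurrences of item $u$ equals the total flow on item-arcs of color $c_u$ and length $l_u$, which by Constraints~\eqref{constr:caaf:demand} is exactly $d_u$; since a path traverses each arc at most once, the per-pattern counts are well defined and sum to $d_u$. Hence these $\bar z$ patterns form a feasible CBPP solution, so the minimum number of bins is at most $\bar z = \caaf$.

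For the reverse inequality $\caaf \le \mathrm{OPT}_{\mathrm{CBPP}}$ I would simply invoke Lemma~\ref{lemma:decompose-back}: an optimal CBPP solution using $\mathrm{OPT}_{\mathrm{CBPP}}$ bins is turned into a feasible point of $\caaf$ with objective value $\mathrm{OPT}_{\mathrm{CBPP}}$, so the minimum of $\caaf$ cannot exceed that. Combining the two inequalities yields $\caaf = \mathrm{OPT}_{\mathrm{CBPP}}$, which is the claim.

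I do not expect a serious obstacle here, because the genuinely delicate content has already been discharged inside Lemmas~\ref{lemma:tightness}, \ref{lemma:non-blocked} and~\ref{lemma:decompose} — namely that the aggregate inequality~\eqref{constr:caaf:color-alt} is strong enough to force an \emph{integral} decomposition into color-alternating paths, which is exactly the subtle point that distinguishes $\caaf$ from a naive relaxation. Relative to that, the theorem is essentially bookkeeping, and the only places deserving explicit mention are the identification of color-alternating $0$--$L$ paths with packing patterns and the demand accounting across patterns; if one wanted, the former could also be phrased through the equivalences of Theorem~\ref{theo:definition}, but it is immediate from the definition of packing pattern and is not strictly needed.
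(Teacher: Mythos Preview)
Your proposal is correct and follows exactly the route the paper takes: the theorem is stated immediately after Lemmas~\ref{lemma:decompose} and~\ref{lemma:decompose-back} and is justified solely by invoking them, so the content is precisely the two-direction sandwich you describe. The additional bookkeeping you supply (identifying color-alternating $0$--$L$ paths with packing patterns and checking that the demand constraints~\eqref{constr:caaf:demand} force the per-item counts to sum to $d_u$) is left implicit in the paper but is exactly what is needed to make the deduction rigorous.
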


\subsection{Comparing $AF_{ml}$ and $AF_{ca}$}
Although they differ in graph representation, both models proposed in this section use the same general idea of arc flow. These types of models typically present very strong lower bounds for bin packing problems. In this section, we develop a correlation between the linear programming relaxation of both proposed models.

First, we remark that model $AF_{ml}$ is similar to the formulation proposed by \citet{carvalho99} for the BPP, although modeled on a different graph to incorporate the color constraints into the flow paths. The latter formulation has been shown to provide the same linear programming lower bound as the set-covering model of \citet{gilmoreG61} for the BPP\@, which is the basis for the MIRUP conjecture, that states that the optimal value of any instance is within one plus this lower bound rounded up~\citep{scheithauerT95}.

In our case, the lower bound provided by $AF_{ml}$ linear relaxation is the same as the lower bound provided by the linear relaxation of the set-partition model (analogous to~\citet{gilmoreG61}) in which all color constraints are handled exclusively in the pricing subproblem.

\begin{lemma}\label{lemma:ml-to-ca}
	Given a feasible solution $(\bar x, \bar z)$ of the linear relaxation of $AF_{ml}$, it is possible to compute a corresponding solution $(\bar x', \bar z)$ that is feasible for the linear relaxation of the $AF_{ca}$.
\end{lemma}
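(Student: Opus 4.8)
The plan is to give an explicit mapping from flow on the multilayered graph $G_{ml}$ to flow on the color-alternating multigraph $G_{ca}$, and then to verify that each family of constraints of the $\caaf$ relaxation is satisfied. The natural map collapses the color layers: given the $\mlaf$ solution $(\bar x, \bar z)$, an item-arc $\big((i,q'),(j,q)\big)$ carrying flow $\bar x_{iq'jq}$ should contribute that flow to the $\caaf$ arc $(i,j,q)$, i.e.\ set $\bar x'_{ijq} = \sum_{q' \neq q} \bar x_{iq'jq}$ (including the source arcs, which we treat as arriving "from color $0$"), and each loss-arc $\big((i,q),(L,q)\big)$ contributes to the single $\caaf$ loss-arc $(i,L,Q+1)$, so $\bar x'_{iLQ+1} = \sum_{q \in [Q]} \bar x_{iqLq}$. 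The objective variable $\bar z$ is kept unchanged, so the two relaxations attain the same value on the mapped point; this is what makes the lemma yield equality of the LP bounds once we also argue the reverse direction (which $\caaf$'s own decomposition machinery, Lemmas~\ref{lemma:decompose} and~\ref{lemma:decompose-back}, essentially already provides, though here only the stated direction is needed).

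First I would check flow conservation, Constraints~\eqref{constr:caaf:flow-conserv}. Fix a vertex $j \in [L-1]$. In $G_{ml}$ the flow into the "column over $j$'' equals the flow out of it, i.e.\ $\sum_{q\in[Q]}\sum_{((i,q'),(j,q))\in A}\bar x_{iq'jq} = \sum_{q\in[Q]}\sum_{((j,q),(k,q''))\in A}\bar x_{jqkq''}$, because summing Constraints~\eqref{constr:mlaf:flow} over all $q\in[Q]$ for that packing point gives zero on the right-hand side. Under the collapse map the left sum is exactly $\sum_{q}\bar x'_{ijq}$ restricted to incoming $\caaf$ arcs ending at $j$, and the right sum is exactly the total $\caaf$ flow (item- and loss-arcs) leaving $j$; loss-arcs need a small bookkeeping remark since a loss-arc $\big((i,q),(L,q)\big)$ in $G_{ml}$ both "leaves'' vertex $i$ and "enters'' the layer-$q$ sink, and in $G_{ca}$ it similarly leaves $i$ and enters $L$, so nothing is lost. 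The same accounting at $j=0$ and $j=L$ gives the $-\bar z$ and $+\bar z$ right-hand sides. The demand Constraints~\eqref{constr:caaf:demand} are immediate: for $u\in\mathcal{I}_q$, the $\caaf$ arcs with $j=i+l_u$, $c_u=q$ receive precisely the combined flow of all layer-to-$q$ item-arcs of length $l_u$, which is $d_u$ by~\eqref{constr:mlaf:demand}. Nonnegativity is clear since we only sum nonnegative quantities.

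The main obstacle is the color-alternation Constraints~\eqref{constr:caaf:color-alt}: for each $j\in[L-1]$ and each $q\in[Q]$ we must show $\sum_{(i,j,q)\in A}\bar x'_{ijq} \le \sum_{(j,k,q')\in A,\, q'\in[Q+1]\setminus\{q\}}\bar x'_{jkq'}$. The key point is that in $G_{ml}$ there are \emph{no} arcs ending at a layer-$q$ vertex that also originate in layer $q$ (item-arcs require $q'\neq q$, and source arcs come from layer $0$), so the left-hand side, $\sum_{(i,j,q)\in A}\bar x'_{ijq} = \sum_{q'\neq q}\sum_{((i,q'),(j,q))\in A}\bar x_{iq'jq}$, equals the total flow \emph{into} the layer-$q$ copy of vertex $j$ in $G_{ml}$. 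By flow conservation at $(j,q)$ in $G_{ml}$ (Constraint~\eqref{constr:mlaf:flow}), this equals the total flow \emph{out} of $(j,q)$, which consists of item-arcs $\big((j,q),(k,q'')\big)$ with $q''\neq q$ together with the layer-$q$ loss-arc out of $j$. Collapsing, every such outgoing arc maps to a $\caaf$ arc $(j,k,q'')$ with $q''\in[Q]\setminus\{q\}$ or to the loss-arc $(j,L,Q+1)$ — in all cases a color in $[Q+1]\setminus\{q\}$ — and these contributions are all accounted for (possibly with other layers' contributions added on top) in the right-hand side of~\eqref{constr:caaf:color-alt}. Hence the inequality holds, indeed as an equality before other layers' flow inflates the right side. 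I expect the only real care needed is the consistent treatment of source-arcs and loss-arcs in the collapse map so that "flow into the layer-$q$ column'' and "flow out of it'' line up exactly with the $\caaf$ sums; once that bookkeeping is fixed, every constraint follows from summing or specializing the corresponding $\mlaf$ constraint.
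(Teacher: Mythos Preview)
Your proposal is correct and follows essentially the same approach as the paper: define the collapse map $\bar x'_{ijq}=\sum_{q'\neq q}\bar x_{iq'jq}$ (with the analogous aggregation for loss-arcs), verify demand and flow-conservation by summing the corresponding $\mlaf$ constraints over the color layers, and derive the color-alternation inequality~\eqref{constr:caaf:color-alt} from flow conservation at the single layer vertex $(j,q)$ together with the fact that every arc leaving $(j,q)$ in $G_{ml}$ has target color in $[Q+1]\setminus\{q\}$. The paper organises the last step as a chain of (in)equalities bounding the difference by zero, while you phrase it as ``flow out of $(j,q)$ is a sub-sum of the right-hand side,'' but the content is identical.
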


\begin{proof}
	In this proof, we denote the set of arcs of the graph corresponding to formulation $AF_{ca}$ by $A'$.

	For all $(i, j, q) \in A'$, let us define
	\[
		\bar x'_{ijq} = 
		\begin{cases}
			\displaystyle\sum_{\substack{((i, q'), (j, q)) \in A\\q' \neq q}} \bar x_{iq'jq}, & \text{if } q \leq Q,\\
			\displaystyle\sum_{q' \in [Q]}\sum_{\substack{((i, q'), (j, q')) \in A}} \bar x_{iq'jq'}, & \text{otherwise,}\\
		\end{cases}
	\]
	where the second case corresponds to the loss-arcs, which only exist for $j = L$. We prove that $(\bar x', \bar z)$ is a feasible solution for $AF_{ca}$.
	
	Since~\eqref{constr:mlaf:demand} is feasible in $\bar x$, for all $u \in \I$ and considering $q = c_u$, we have that
	\[
		\sum_{\substack{(i, j, q') \in A\\j=i+l_u, q'=c_u}} \bar x'_{ijq'} 
		% = 
		% \sum_{\substack{(i, j, q) \in A\\j=i+l_u}} \bar x'_{ijq} 
		= 
		\sum_{\substack{(i, j, q) \in A\\j=i+l_u}}
		\sum_{\substack{q' \neq q\colon\\ ((i, q'), (j, q)) \in A}} \bar x_{iq'jq}
		=
		\sum_{\substack{((i,q'),(j,q)) \in A\colon\\ j = i + l_u, q' \neq q}} \bar x_{iq'jq}
		= d_u,
	\]
	and, thus, we conclude that Constraints~\eqref{constr:caaf:demand} are valid for $\bar x'$.

	Now, we can show that Constraints~\eqref{constr:caaf:flow-conserv} hold, as $\bar x$ satisfies Constraints~\eqref{constr:mlaf:flow}. In fact, for all $j \in \{0, \dots, L\}$, we have that
	\begin{align*}
		\sum_{(i,j,q) \in A'} \bar x_{ijq}' - \sum_{(j,k,q') \in A'} \bar x_{jkq'}'
		 & = \sum_{((i, q'), (j, q)) \in A} \bar x_{iq'jq} -
		\sum_{((j, q), (k, q'')) \in A} \bar x_{jqkq''}      \\
		 & = \begin{cases}
			     -z & \mbox{for } j=0, q = 0         \\
			     0  & j\in\{1,\dots,L-1\}, q \in [Q] \\
			     z  & j=L, q \in [Q].                \\
		     \end{cases}
	\end{align*}

	Finally, we can derive the validity of Constraints~\eqref{constr:caaf:color-alt} as follows. For fixed ${j \in \{0, \dots, L-1\}}$ and $q \in [Q]$, we have that
	\begin{align}
		\sum_{(i,j,q) \in A'} \bar x'_{ijq} - \sum_{\substack{(j,k,q') \in A' \\ q' \in [Q + 1] \setminus \{q\}}} \bar x'_{jkq'}
		 & =
		\sum_{((i,q'),(j,q)) \in A} \bar x_{iq'jq} -
		\sum_{((j, q''), (k, q')) \in A} \bar x_{jq''kq'}\label{eq:ca:no_equal}\\
		 & \leq
		\sum_{((i,q'),(j,q)) \in A} \bar x_{iq'jq} -
		\sum_{((j,q),(k,q')) \in A} \bar x_{jqkq'} \label{eq:ca:equality}       \\
		 & = 0,\label{eq:ca:flow}
	\end{align}
	where Equation~\eqref{eq:ca:no_equal} follows from the fact that there are no loss-arcs with $j < L$, Inequality~\eqref{eq:ca:equality} holds as $\bar x \geq 0$ and  Equation~\eqref{eq:ca:flow} follows from the flow conservation imposed by Constraint~\eqref{constr:mlaf:flow}.
\end{proof}

\begin{lemma}\label{lemma:ca-to-ml}
	Given a solution $(\bar x, \bar z)$ to the linear programming relaxation of $AF_{ca}$, it is possible to compute a corresponding solution $(\bar x', \bar z)$ that is feasible for the linear relaxation of $AF_{ml}$.
\end{lemma}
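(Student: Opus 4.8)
The plan is to reverse the construction of Lemma~\ref{lemma:ml-to-ca} by first decomposing the $AF_{ca}$ solution into color-alternating paths and then re-routing each path through the layered graph. Concretely, given $(\bar x, \bar z)$ feasible for the linear relaxation of $AF_{ca}$, I would first invoke (a fractional analogue of) Algorithm \proc{DecomposeAF} from Lemma~\ref{lemma:decompose} to write $\bar x$ as a nonnegative combination $\sum_{P} \lambda_P \chi_P$ of color-alternating paths $P$ from $0$ to $L$, with $\sum_P \lambda_P = \bar z$. Since the arguments behind Lemmas~\ref{lemma:tightness} and~\ref{lemma:non-blocked} only use flow-conservation and the color-alternating inequalities~\eqref{constr:caaf:color-alt}, which hold for fractional solutions as well, the decomposition goes through with fractional multiplicities $\lambda_P$ (one should note explicitly that $\bar z$ need not be integral, so the recursion is on support size, not on integrality). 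Each such $P$ is a sequence of arcs whose colors strictly alternate, with the final arc possibly a loss-arc of color $Q+1$.

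Next I would map each color-alternating path $P$ in $G$ to a path $P'$ in the multilayer graph of $AF_{ml}$. Walking along $P = ((0,j_1,q_1),(j_1,j_2,q_2),\dots)$, the $k$-th vertex $j_k$ sits between an incoming arc of color $q_k$ and an outgoing arc of color $q_{k+1}$ with $q_k \neq q_{k+1}$; I place that vertex in layer $q_k$, i.e.\ use the $AF_{ml}$-vertex $(j_k, q_k)$, and route the $k$-th arc of $P$ from $(j_{k-1}, q_{k-1})$ to $(j_k, q_k)$. This is exactly an $A_{item}$-arc of $AF_{ml}$ because its two endpoints lie in different layers $q_{k-1} \neq q_k$ and it has the right length and head-color $q_k$. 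The source arc (from $(0,0)$) and, when present, the terminal loss-arc into $(L, q)$ are handled by $A_{source}$ and $A_{loss}$ respectively. Defining $\bar x'$ as the sum over all paths $P$ of $\lambda_P$ times the indicator of the image path $P'$ gives a nonnegative vector on $A$, and the total flow out of the source is still $\sum_P \lambda_P = \bar z$.

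Finally I would verify feasibility of $(\bar x', \bar z)$ for the relaxation of $AF_{ml}$: flow conservation~\eqref{constr:mlaf:flow} holds because $\bar x'$ is a sum of source-to-sink path flows in the layered graph; the demand constraints~\eqref{constr:mlaf:demand} hold because each arc of $P$ carrying item $u$ (color $c_u$, length $l_u$) maps to an $A_{item}$-arc of $AF_{ml}$ with head-color $c_u = q$ and the same length, so summing over all paths recovers $\sum_{(i,j,u)} \bar x_{iju} = d_u$; and nonnegativity/the relaxed integrality bound is immediate. The main obstacle I anticipate is the bookkeeping for the endpoints: making sure the layer assignment is consistent at both ends of every arc (the tail vertex $j_{k-1}$ was placed in layer $q_{k-1}$ by the previous step, so the arc $(j_{k-1},q_{k-1})\to(j_k,q_k)$ is genuinely present in $A$), and handling the degenerate cases — a single-arc path $0 \to L$, a path whose last item-arc already lands on $L$ (no loss-arc needed), and the fact that loss-arcs in $AF_{ml}$ are allowed within a single layer whereas item-arcs are not. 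One should also note that the decomposition into paths is not unique and different color-alternating decompositions may yield different $\bar x'$; any one of them suffices, so no canonical choice is required.
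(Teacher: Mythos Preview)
Your proposal is correct and follows essentially the same route as the paper: decompose $(\bar x,\bar z)$ into color-alternating paths via \proc{DecomposeAF}, then lift each path to the layered graph by placing the $k$-th intermediate vertex in layer $q_k$ (the color of its incoming arc), with the loss-arc staying inside its layer. Your treatment is in fact slightly more careful than the paper's, since you explicitly flag that the decomposition must work for fractional $(\bar x,\bar z)$ and that Lemmas~\ref{lemma:tightness} and~\ref{lemma:non-blocked} do not rely on integrality; the paper simply invokes \proc{DecomposeAF} without commenting on this point.
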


\begin{proof}
	Consider that we run algorithm \proc{DecomposeAF} with solution $(\bar x, \bar z)$ and obtain a set of color-alternating patterns $\mathcal{P}$.
	% As $(\bar x, \bar z)$ is not necessarily integral, 

	Now, for each path $P = ((i_0, i_1, q_1), (i_1, i_2, q_2), \dots, (i_{k-1},i_k,q_k))$ with $i_0 = 0$ generated from $\hat x$, we define a path $P'$ for $AF_{ml}$ such that \[P' = (((i_0,q_0), (i_1,q_1)), ((i_1,q_1),(i_2,q_2)), \dots, ((i_{k-1},q_{k-1}), (i_k,q')))\] where $q_0 = 0$ and $q' = q_{k-1}$ if $(i_{k-1},i_k,q_k)$ is a loss-arc and $q'= q_{k}$ otherwise. Finally, we add~$\check d$ units of flow to each arc in $P'$, where $\check d$ is the amount of flow passing through the path~$P$ in $\hat x$, and we obtain a solution $(\bar x', \bar z)$ of $AF_{ml}$ by combining the flow of every path in $\mathcal{P}$.

	It is easy to see that, by construction, $\bar x'$ is non-negative and satisfies the flow conservation constraints and the demand constraints.
\end{proof}

As a consequence of Lemmas~\ref{lemma:ml-to-ca}~and~\ref{lemma:ca-to-ml}, we can state the following theorem about the strength of the linear relaxation of the proposed models.

\begin{theorem}
	The lower bounds provided by the linear relaxation of formulations $AF_{ml}$ and $AF_{ca}$ are the same.
\end{theorem}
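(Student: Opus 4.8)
The plan is to establish the two inequalities $z^*_{ml} \le z^*_{ca}$ and $z^*_{ca} \le z^*_{ml}$ separately, where $z^*_{ml}$ and $z^*_{ca}$ denote the optimal values of the linear relaxations of $AF_{ml}$ and $AF_{ca}$, and then conclude equality. Both directions will be immediate corollaries of the two preceding lemmas once one isolates the key point: each of Lemma~\ref{lemma:ml-to-ca} and Lemma~\ref{lemma:ca-to-ml} produces a feasible solution of the \emph{other} relaxation with \emph{the same} value $\bar z$ of the $z$-variable. Since in both formulations the objective is literally $z$, exhibiting such a solution certifies a feasible point of equal cost in the other relaxation, and no further estimate is needed.

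Concretely, I would take an optimal solution $(\bar x, \bar z)$ of the relaxation of $AF_{ml}$, so that $\bar z = z^*_{ml}$; by Lemma~\ref{lemma:ml-to-ca} there is a feasible solution $(\bar x', \bar z)$ of the relaxation of $AF_{ca}$, whence $z^*_{ca} \le \bar z = z^*_{ml}$. Symmetrically, starting from an optimal solution $(\bar x, \bar z)$ of the relaxation of $AF_{ca}$ and applying Lemma~\ref{lemma:ca-to-ml} gives a feasible solution of the relaxation of $AF_{ml}$ with value $\bar z = z^*_{ca}$, so $z^*_{ml} \le z^*_{ca}$. Combining the two inequalities yields $z^*_{ml} = z^*_{ca}$. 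If one prefers not to presuppose that the optima are attained, the argument runs verbatim with infima over feasible solutions: both relaxations are feasible whenever the instance is, and their objectives are bounded below by $0$.

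The one place I expect to have to be careful is in invoking Lemma~\ref{lemma:ca-to-ml}: its proof feeds $(\bar x, \bar z)$ into \proc{DecomposeAF}, but that procedure and the results supporting it (Lemmas~\ref{lemma:tightness} and~\ref{lemma:non-blocked}, Corollary~\ref{corollary:tightness}, and Lemma~\ref{lemma:decompose}) were stated for \emph{integer} feasible solutions, whereas here $(\bar x,\bar z)$ is an arbitrary point of the relaxation. I would therefore check explicitly that none of those arguments uses integrality: the tightness and unblocked-arc claims only manipulate equalities and inequalities between sums of nonnegative reals, and the $O(|A|)$ termination bound only needs that the chosen $\check d = \min_{(i,j,q)\in P} \bar x_{ijq}$ zeroes out at least one variable, which remains true for fractional flows. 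Granting this, \proc{DecomposeAF} decomposes any fractional feasible $(\bar x,\bar z)$ of $AF_{ca}$ into weighted color-alternating $0$--$L$ paths of total weight $\bar z$, each of which lifts to a $0$--$L$ path in the multilayered graph exactly as described in Lemma~\ref{lemma:ca-to-ml}; summing these lifted paths with their weights gives a nonnegative $\bar x'$ that satisfies the flow-conservation and demand constraints of $AF_{ml}$ by linearity and has objective value $\bar z$, which is all the sandwich argument requires.
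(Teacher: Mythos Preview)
Your proposal is correct and matches the paper's approach exactly: the theorem is stated there simply as a consequence of Lemmas~\ref{lemma:ml-to-ca} and~\ref{lemma:ca-to-ml}, i.e., the same two-direction sandwich on the shared objective $z$. Your explicit check that \proc{DecomposeAF} and its supporting lemmas do not rely on integrality is a useful clarification that the paper leaves implicit.
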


\subsection{Graph Reduction}
Both models presented in this section represent packing patterns as paths in a digraph with $L + 1$ vertices. This makes the number of variables (and constraints) of the models grow pseudo-polinomially with the bin capacity. To mitigate this, we use a reduction technique known as \textit{normal patterns} (also known as \textit{canonical dissections}). This technique was proposed independently by \citet{herz72} and \citet{christofidesW77}, and it reduces the number of packing points of a bin by showing that there will always exist an optimal solution in which all items are packed in the leftmost position available. The points of a bin in which it is possible to pack an item are defined as $\{x = \sum_{i \in \I} l_i \pi_i: 0\leq x \leq L, \pi_i \in \{0,1\}$, for $i \in \I\}$. This set is computed through a dynamic programming algorithm \citep{coteI18} before building the graph. The graph is then created using only vertices that represent points in the normal patterns, thus achieving a more compact graph while still preserving optimality.

Normal patterns is one of the most generic reduction techniques for packing points in a bin. Although there exist other more aggressive reductions, such as meet-in-the-middle proposed by~\citet{coteI18} and reflect proposed by~\citet{delormeI20} (specifically for arc flow), those require extra investigative work to be adapted for our problem since the order of the packing matters.

\section{Exponential Models\label{sec-exp}}
In this section, we propose set-partition exponential models for the CBPP that represent all packing patterns implicitly. These models can be solved through branch-and-price or branch-cut-and-price algorithms, and usually present very strong upper bounds~\citep{delormeIM16}. We use the VRPSolver generic scheme, proposed by \citet{pessoaSUV19, pessoaSUV20} to design such models for the CBPP\@. With this scheme, we reduce our problem into a variant of the Vehicle Routing Problem (VRP), which is then solved by a black-box branch-and-cut-and-price algorithm. 

In the remainder of this section, we describe how the VRPSolver works in the context of the CBPP, and then describe the two models proposed. 

\subsection{VRPSolver}
VRPSolver is a generic branch-cut-and-price solver that incorporates several modern algorithmic improvements such as rank-1 cuts with limited memory, path enumeration, and rounded capacity cuts. It solves a variant of the VRP that is sufficiently general to model many other variants of the VRP or even other problems such as the BPP and the Generalized Assignment Problem. A model for the BPP using this framework proposed by \citet{pessoaSUV20} showed remarkable results when compared to other modern algorithms designed specifically for the BPP\@. While we do not go into detail on how the framework manages to generalize the key elements of branch-cut-and-price algorithm, we strongly recommend the paper by \citet{pessoaSUV20} to the interested reader.  

The main concept of VRPSolver is to provide a formulation that can be solved by a branch-cut-and-price algorithm in which the pricing problem is modeled as a Resource Constrained Shortest Path problem (RCSP). The variables of the resulting paths are linked to specifically designed objective function and constraints through \textit{mappings}. These constraints and objective function are then added to the original master formulation to be solved by the branch-and-cut-and-price algorithm.

\subsubsection{Resource Constrained Shortest Path Problem}\label{sec:RCSP}
An instance of the RCSP is composed of a directed multi-graph $G = (V,A)$, a source vertex $v_{source} \in V$, a sink vertex $v_{sink} \in V$, and a set of resources $R$. For each arc $a \in A$, there is a cost $c_a$ and, for each resource $r \in R$, there is a consumption value $q_{ar}\in \mathbb{R}$. There is a finitely accumulated resource consumption interval $[l_{ar}, u_{ar}]$ for each resource $r\in R$ and arc~$a~\in~A$. 
%
%We may assume that either $G$ is acyclic or there is at least one resource with no negative consumption.
%
A path $p=(v_{source}, a_1, v_1, \dots, a_{n-1}, v_{n-1}, a_n, v_{sink})$ with $n \geq 1$ is called a resource constrained path of $G$ if
$v_i \in V$,
$a_i = (v_{i-1}, v_{i}) \in A$ (considering $v_0 = v_{source}$ and $v_n = v_{sink}$) and,  
for each~$r \in R$, the accumulated resource consumption at the $j$-th vertex of $p$, $S_{j,r} = \max\{l_{a_j,r}, S_{j-1,r} + q_{a_j,r}\}$ and is at most $u_{a_j,r}$ for all $1 \leq j \leq n$, with $S_{0,r} = 0$. Let $P$ be the set of all resource constrained paths of $G$. The objective is to find a resource constrained path $p \in P$ that minimizes the sum of the costs of the arcs used.

Let $p$ be any resource constrained path of $G$, $h^p$ is a vector of integer variables that describes which arcs are used in $p$, so for any arc $a \in A$, $h^p_a$ represents the number of times arc $a$ is used in path $p$. This concept will be useful for mapping resource constrained paths to variables in the master formulation.

% As previously stated, the idea is to model our problem in such a way that the pricing problem is a RCSP\@. To illustrate this concept, consider an example for the BPP provided by \citet{pessoaSUV20}. 
% Let $G = (V,A)$, with $V = \{v_0, v_1, \dots, v_{|\I|}\}$
% and $A = \{ a_i^+, a_i^- \colon i \in \I \}$ where, for $i \in \I$,
% $a_i^+ = (v_{i-1}, v_i)$ and $a_i^- = (v_{i-1}, v_i)$ are parallel arcs. We also define 
% $R = \{r\}$,
% $v_{source} = v_0$, 
% $v_{sink} = v_{|\I|}$,
% $l_{v,r} = 0$ and $u_{v,r} = L$ for $v \in V$, and
% $q_{a_i^+,r} = l_i$ and $q_{a_i^-,r} = 0$ for $i \in \I$.

% The cost of the positive arcs are given by the dual values of the master formulation and the cost of the negative arcs are zero.\todo{Yulle, confirma para mim.} In the graph described above, a path from $0$ to $|\I|$ describes which items are included in a packing pattern. Let $p$ be a path in graph $G$, if $h^p_{a^+_i} > 0$ then item $i$ is included in~$p$. Conversely, if~$h^p_{a^-_i}>0$ then item $i$ is not included in the resulting pattern.
% \todo[inline]{Insert figure to illustrate the RCSP graph for the BPP}

\subsubsection{Modeling and Mappings}\label{sec:modeling}
The VRPsolver's restricted master problem formulation has three sets of variables. The first set of $n_1$ integer variables are mapped to the arcs of the RCSP graphs. The second set of $n_2$ integer variables are generic and allow modeling different constraints. Finally, there is a non-negative integer variable $\lambda_p$ for each path in the list of resource constrained paths $P$ considered in the restricted master problem.

For each variable of the first set, $x_j$ for $j \in [n_1]$, there must exist a \textit{mapping} $M(x_j)$ into a non-empty subset of arcs of the RCSP graph $G$. While not all arcs of $G$ need to be mapped into a variable of the model, it is still useful to define $M^{-1}(a) = \{j|a \in M(x_j)\}$.

Although the framework allows for multiple graphs to be used together for the same model, we omit it from the formulation for simplicity, since this fact is not relevant in our case. With this in mind, we can describe the master formulation as follows.
{\footnotesize
\begin{alignat}{4}
    \quad GMF=\,& \omit\rlap{$\displaystyle \min\sum_{j=1}^{n_1} c_jx_j + \sum_{s=1}^{n_2} f_sy_s$}                                                                                               \label{obj:gmf}\\
                & \mbox{s.t.}   &&\quad    &  \sum_{j=1}^{n_1}\alpha_{ij}x_{j} + \sum_{s=1}^{n_2}\beta_{is}x_{is}   & \geq d_i          \qquad && \forall\, 1 \leq i \leq m,                       \label{constr:gmf:gen}\\ 
                &               &&         &  x_{j} = \sum_{p \in P}\Bigg(\sum_{a \in M(x_j)} h^p_a \Bigg)          & \phantom{\leq}\lambda_p    \qquad && \forall\, 1 \leq j \leq n_1,            \label{constr:gmf:map} \\
                &               &&         &  \mathcal{L} \leq \sum_{p \in P}\lambda_p                                        & \leq \mathcal{U},                                                                      \label{constr:gmf:limits} \\
                &               &&         &  \lambda_p                                                             & \in \mathbb{Z}^+  \qquad && \forall p \in P,                                 \label{constr:gmf:lambda} \\
                &               &&         &  x_j, y_s                                                              & \in \mathbb{Z}    \qquad && \forall\, 1 \leq j \leq n_1 , 1 \leq s \leq n_2, \label{constr:gmf:integrality}
\end{alignat}
}%
in which Equation~\eqref{obj:gmf} is a general linear objective function with $c \in \mathbb{R}^{n_1}$ and $f \in \mathbb{R}^{n_2}$ being cost vectors, Constraints~\eqref{constr:gmf:gen} represents $m$ general linear constraints over the first two sets of variables, with $\alpha \in \mathbb{R}^{m\times n_1}$ and $\beta \in \mathbb{R}^{m\times n_2}$ being the coefficient matrices for these variables, and $d \in \mathbb{R}^m$ the right-hand side vector. Constraints~\eqref{constr:gmf:map} are the mapping constraints, and ties variables of the first group to arcs in the RCSP graph. Constraints~\eqref{constr:gmf:limits} define lower bounds $\mathcal{L}$ and upper bounds $\mathcal{U}$ on the number of paths used in the solution. Constraints~\eqref{constr:gmf:lambda}--\eqref{constr:gmf:integrality} define the domains of variables.

With a specifically designed RCSP graph and the generic master formulation above, we can define specific models for several problems in the literature.

\subsubsection{Packing Sets}
The VRPSolver framework generalizes many concepts used in state-of-the-art branch-and-cut-and-price algorithms. Such concepts include ng-ranks, path enumeration, limited memory rank-1 cuts, Ryan-Foster branching rule, branching over accumulated resource consumption, and others. This is achieved with the introduction of \textit{Packing Sets}.

Packing Sets of the VRPSolver model are defined as a collection $\mathcal{P}$ of mutually disjoint subsets of arcs $A$, such that the arcs in each $S \in \mathcal{P}$ appear at most once in all paths that are part of some optimal solution $(x^*, y^*, \lambda^*)$ of the master problem, that is, for all $S \in \mathcal{P}$, $\sum_{p \in P} \Big(\sum_{a \in S} h^p_a\Big) \lambda_p^* \leq 1$. We refer to each element in $\mathcal{P}$ as a packing set.

The definition of the Packing Sets does not derive directly from the generic model, thus it is a modeling task to properly define it and show its validity. Furthermore, some features of the algorithm depend on the definition of the packing sets, and since some of these features such as the Ryan-Foster branching rule are necessary for the correctness of the model, the definition of packing sets becomes mandatory. We refer to~\citet{pessoaSUV20} for more information on Packing Sets and how they are used to generalize important algorithmic concepts of this framework. 

% To build on the model for the BPP presented in Section~\ref{sec:modeling}, we define its packing sets as $\mathcal{P} = \cup_{i \in \I} \{ \{a^+_i\} \}$. In other words, there is one packing set for each item in the instance, each including only the arc mapped to the variables of the master formulation for that item. Since the item lenghts are non-negative, there must exist an optimal solution in each item will be used exactly once, hence the packing sets are valid.

\subsection{Color-Resources Partition Model for the CBPP}\label{sec-exp-crsp}
For our first VRPSolver model for the CBPP, we propose a generalization of the model and graph described in the previous sections. Thus, consider a graph similar to the one presented by \citet{pessoaSUV20}, that is $G = (V,A)$, with $V = \{v_0, v_1, \dots, v_{|\I|}\}$ with $v_{source} = v_0$ and $v_{sink} = v_{|\I|}$, 
and $A = \{a_i^+, a_i^- \colon i \in \I \}$ where
$a_i^+ = (v_{i-1},v_i)$ and $a_i^- = (v_{i-1},v_i)$ 
for $i \in \I$.
%\todo[inline]{provide figure showing an example graph for this model, or perhaps figure from last section suffices}

As in the model for the BPP consider a resource $\bar r \in R$ for the capacity with $q_{a_i^+,\bar{r}} = l_i$ and $q_{a_i^-,\bar{r}} = 0$ for $i \in \I$ and $l_{v,\bar{r}} = 0$ and $u_{v,\bar{r}} = L$ for $v \in V$.  Consider also, for each color $c \in [Q]$, a resource $r_c \in R$ with $q_{a_i^-,r_c} = 0$ and, if $c_{i} = c$, then $q_{a_i^+,r_c} = 1$, and, if $c_{i} \neq c$, then $q_{a_i^+,r_c} = -1$. Finally, for each color $c \in [Q]$ and vertex $v \in V$, the bounds on the accumulated resources are $l_{v,r_c} = -|\I|$, and $u_{v,r_c} = 1$ if $v = v_{sink}$ and $u_{v,r_c} = |\I|$ otherwise. Like in the BPP model, the packing sets are the singletons composed of the positive arc of each item, that is $\mathcal{P} = \cup_{i \in \I} \{ \{ a_i^+ \} \}$. 

%Unlike with the previous graph though, consider one main resource for the capacity $\bar r \in R$ and one additional secondary resource $r_c \in R$ for each color $1 \leq c \leq Q$. For the capacity resource we have 
%\begin{align}
%    \begin{array}{ll} q_{a_i^+,\bar{r}} = l_i \\ q_{a_i^-,\bar{r}} = 0 \end{array}   & & \begin{array}{ll} \mbox{ for } 0 \leq i \leq |\I|, \\ \end{array}
%\end{align}%
%and $[l_{v_i,\bar{r}}, u_{v_i,\bar{r}}] = [0, L]$. For each color resource $r_c$, for $1 \leq c \leq Q$, we have
%\begin{align}
%    \begin{array}{ll} q_{a_i^+,r_c} = 1 \\ q_{a_i^+,r_c} = -1 \end{array}   & \begin{array}{ll} \mbox { if } c_i = c \\ \mbox{ Otherwise} \end{array} & \begin{array}{ll} \mbox{ for } 1 \leq i \leq |\I|, \\ \end{array}
%\end{align}%
%and $q_{a_i^-,r_c} = 0$, with bounds on the accumulated resources $[l_{v_i,r_c}, u_{v_i,r_c}] = [-|\I|,|\I|]$ for all $v_i \in V\setminus \{v_{sink}\}$ and $[l_{v_{sink},r_c}, u_{v_{sink},r_c} = [-|\I|,1]$ for all $1 \leq c \leq Q$. Like in the BPP model, the packing sets are the singletons composed of the positive arc of each item, that is $\mathcal{P} = \cup_{1 \leq i \leq |\I|} \{ \{ a_i^+ \}\}$. 

In this model, all color constraints are enforced in the pricing problem through the RCSP graph. Whenever an item of color $c$ is included in a resource constrained path of $G$, one unit of resource $r_c$ is collected and one unit of resource is dropped for every other $r_{c'}$ such that $c' \neq c$. Through this procedure, we have the color discrepancy for the path at each vertex, so we enforce that such color discrepancy be smaller than or equal to $1$ at the sink vertex for all colors. From Theorem~\ref{theo:definition}, we know that this condition is necessary and sufficient to guarantee the existence of a color-alternating permutation of the items in the path. Since all color constraints are modeled in the RCSP, the master formulation can be described with integer variables $x_0, x_1,\dots, x_{|\I|}$, where $x_0$ represents the number of paths used, while~$x_i$ indicates whether item $i$ is covered in a path, 
and mappings $M(x_0) = \{a_1^+, a_1^-\}$, $M(x_i) = \{a_i^+\}$ for $i \in \I$. The master formulation is given by
{\footnotesize
\begin{alignat}{4}
    \quad  & \omit\rlap{$\displaystyle \min x_0$} \label{obj:cbpp1}\\
           & \mbox{s.t.}   &&\quad    &  x_{i}                                                                 &=1                          \qquad && \forall\, i \in \I',        \label{constr:cbpp1:demand}\\ 
           &               &&         &  x_{i} = \sum_{p \in P}\Bigg(\sum_{a \in M(x_i)} h^p_a \Bigg)          & \phantom{\leq}\lambda_p    \qquad && \forall\, i \in \I',        \label{constr:cbpp1:map} \\
           &               &&         &  \mathcal{L} \leq \sum_{p \in P}\lambda_p                                        & \leq \mathcal{U},                                                                   \label{constr:cbpp1:limits} \\
           &               &&         &  \lambda_p,x_{i}                                                       & \in \mathbb{Z}^+           \qquad && \forall\, p \in P, \forall i \in \I' \cup \{0\},\label{constr:cbpp1:integrality}                  
\end{alignat}%
}%
in which Equation~\eqref{obj:cbpp1} takes the place of Equation~\eqref{obj:gmf}, and represents the objective of minimizing the number of paths or packing patterns used. Constraints~\eqref{constr:cbpp1:demand} takes the place of the general Constraint~\eqref{constr:gmf:gen}, and enforce that each item appear in exactly one path used in the solution. The remaining constraints,~\eqref{constr:cbpp1:map}--\eqref{constr:cbpp1:integrality} are derived directly from general master formulation. For the BPP, we can use $\mathcal{L} = 1$ and $\mathcal{U} = \sum_{i \in \I} d_i$ as trivial bounds on the number of paths. 

\subsection{Colored-Clusters Partition Model for the CBPP}\label{sec-exp-ccsp}
For our second model, we propose a denser graph, but with only one resource constraint. Let $G = (V,A)$ be our colored-clusters graph. The set of vertices is $V = \{v_{source}, v_{sink}\} \cup \{v_i \colon i \in \I' \} \cup \{v_c^{in}, v_c^{out}\colon c \in [Q]\}$. And the set of arcs is 
$A = \{a_i^{source}, a_i^{sink}, a_i^{in}, a_i^{out} \colon i \in \I' \} \cup \{a_c^{c'} = (v_c^{out}, v_{c'}^{in}) \colon c,c' \in [Q], c \neq c' \}$ 
where, for $i \in \I'$,
$a_i^{source} = (v_{source},v_i)$,
$a_i^{sink} = (v_i,v_{sink})$,
$a_i^{in} = (v_{c_i}^{in}, v_i)$, and
$a_i^{out} = (v_i, v_{c_i}^{out})$.

In this graph, there exists one vertex per item as well as one inbound and one outbound vertex for each color. Arcs connect items to their respective inbound colors and outbound colors to all their items. Furthermore, there are arcs connecting each inbound color to every other outbound color. At last, there are arcs connecting the source to each item vertex and from each item vertex to the sink. Hence, all paths in $G$ will alternate between item vertices and color vertices and, since there are only arcs connecting inbound vertices to outbound vertices of different colors, there can be no consecutive items from the same color in a path. Thus, all color constraints are modeled in the RCSP\@. 
%\todo[inline]{Provide figure with an example of this graph}

Since there is only one resource for the capacity $L$, so we omit the resource index in the consumption notation. Thus, we have, $q_{a_i^{source}} = q_{a_i^{in}} = l_i$, and $q_{a_i^{sink}} = q_{a_i^{out}} = 0$, for $i \in \I'$, and $q_{a_c^{c'}} = 0$ for $c, c' \in [Q]$ with $c \neq c'$.

Finally, the bounds on the accumulated resources are $l_v = 0$ and $u_v = L$ for all $v \in V$ and the packing sets are defined as $\mathcal{P} = \cup_{i \in \I'} \{ \{ a_i^{source}, a_i^{in} \} \}$. Since each item must be included exactly once in an optimal solution, we know that such a solution path will either include an arc from the source to the item vertex $v_i$ or an arc from the inbound color vertex $v^{in}_{c_i}$ to the item vertex $v_i$ for $i \in \I'$.

Since all color constraints are modeled in RCSP, the master formulation can, once again, be described with integer variables $x_0, x_1, \dots, x_{|\I|}$, and mappings $M(x_0) = \{a_1^{source}, a_2^{source}, \dots, a_{|\I|}^{source}\}$, and $M(x_i) = \{a_i^{source}, a_i^{in}\}$ for $i \in \I$. With this graph and mappings, we can use Formulation~\eqref{obj:cbpp1}--\eqref{constr:cbpp1:integrality} to solve the CBPP\@.

\section{Numerical Experiments\label{sec-experiments}}
In this section, we discuss numerical experiments used to evaluate all the proposed models. Since, to the best of our knowledge, there are no benchmarks for the CBPP in the literature, we propose our own set of instances. In Section~\ref{sec-experiments-benchmark}, we describe our proposed benchmark set, and in Section~\ref{sec-experiments-results} we discuss the experimental results. 

\subsection{Benchmark Instances}\label{sec-experiments-benchmark}
We propose three different sets of instances, with different levels of complexity. Each set is described below.

\paragraph{Uniform Randomly Generated Instances} It is composed of instances with varied item lengths and bin capacities, similar to those proposed by \citet{borgesMSX20} for a class constrained version of the BPP\@. Most instances in this set should be solved by all models without difficulty. This group of instances is represented by a tuple $(M,L,Q,W)$. The value of~$M$ corresponds to the number of items in the instance, and we consider $M \in \{300, 500\}$. The value of~$L$ corresponds to the capacity of bins, and we consider $L \in \{500, 750, 1000\}$. The value of~$Q$ corresponds to the number of different colors of items, and we consider~$Q~\in~\{2,7,15\}$. The value of $W$ corresponds to the range for the length of items relative to the bin capacity, and we consider $W \in  \{[0.1, 0.8], [0.01, 0.25]\}$. We generate $10$ instances for each tuple, resulting in 360 total instances.

\paragraph{Randomly Generated Instances with Zipf Distribution of Colors} It is composed of randomly generated instances in which the number of items of each color is uneven.  In this set, we have instances in which there are few colors with many items and many colors with only a few items. This characteristic is useful to model, for instance, the application of packing content in pages on a social media platform, in which we must alternate advertisements and other types of content while having most adverts come from only a few advertisers. The Zipf distribution~\citep{knuth98} has been used before to model data placement applications with such locality properties~\citep{xavierM08}. Instances in this set are represented by a tuple $(M,L)$. The value of $M$ represents the number of items, and $M \in \{300, 500\}$. The value of $L$ represents the capacity of the bins, and $L \in \{300, 500, 750\}$. The lengths of items are generated randomly with a uniform distribution in the interval of $[0.01, 0.25]$, relative to the bin capacity. All colors are drawn according to a Zipf distribution with $\alpha=2$. We generate $10$ instances for each tuple, totaling $60$ instances in this set. 

\paragraph{Instances Adapted from BPP Literature} It is composed of hard instances adapted from BPPLIB~\citep{delormeIM18}. This group is divided into $94$ instances in which the integer round-up property holds (AI), and $60$ instances in which the integer round-up property does not hold (ANI). We used only instances from which an optimal solution was known, all optimal solutions used were taken from \citet{delormeI20}. Given an AI or ANI instance, for each bin in its optimal solution, the items are sorted in a non-increasing order and colored the first half with one color and the second half with a different color. This would guarantee that the same configuration of items in that optimal solution would be feasible in the CBPP (by Theorem~\ref{theo:definition}), thus maintaining the same optimal solution value. 

\subsection{Computational Results}\label{sec-experiments-results}
We have implemented all the proposed models to evaluate their performance. The pseudo-polynomial models of Section~\ref{sec-pseudo} were implemented in C++11, using Gurobi version 9.03 for solving MILP models. The exponential models of Section~\ref{sec-exp} were implemented in Julia version~1.2, using VRPSolver beta v0.3 and CPLEX version 12.8 for solving the underlying LP models. Experiments were run on Intel (R) Xeon (R) CPU E5--2630 v4 with~10 cores at~2.20GHz, 64 GB of RAM, and running on Ubuntu 18.04.2 LTS with Linux~\mbox{4.15.0--45--generic}. 

For the pseudo-polynomial models, we have implemented a simple FF-like heuristic to obtain a simple initial feasible solution. For the exponential models, VRPSolver has a diving heuristic that is run periodically during the search in order to find feasible solutions. None of the models were provided with cutoff values to reduce the solution space. All models were solved with the single-thread mode of their respective solvers. We consider a time limit of~1800 seconds for each instance. 

\subsubsection{Results on Uniform Randomly Generated Instances}
We start by evaluating the proposed models on the set of Uniform Randomly Generated Instances. This is the largest of the proposed sets and covers a wide variety of cases. Since, for the CBPP, the color constraints are enforced on the adjacency of items, it makes sense to investigate instances in which many items can be packed in the same bin. For this reason, half of the instances in this set contain only items with lengths of at most one-quarter of the bin capacity, thus fitting at least four items in any maximal packing pattern. This set also tests the weight of having many different colors, since the number of colors ranges from~$2$ to~$15$. 

\begin{table}[ht!]
\scriptsize
\centering
\begin{tabular}{@{}cccccccccccccccccc@{}}
\cmidrule(r){1-6} \cmidrule(lr){8-10} \cmidrule(lr){12-14} \cmidrule(l){16-18}
    &      &    & \multicolumn{3}{c}{CA-AF} &  & \multicolumn{3}{c}{ML-AF} &  & \multicolumn{3}{c}{CC-partition} &  & \multicolumn{3}{c}{CR-partition} \\ \cmidrule(lr){4-6} \cmidrule(lr){8-10} \cmidrule(lr){12-14} \cmidrule(l){16-18} 
M   & L    & Q  & opt          & time & gap &  & opt          & time & gap &  & opt           & time   & gap     &  & opt           & time   & gap     \\ \midrule
300 & 500  & 2  & \textbf{10}  & 3    & 0   &  & \textbf{10}  & 4    & 0   &  & \textbf{10}   & 95     & 0       &  & \textbf{10}   & 42     & 0       \\
300 & 500  & 7  & \textbf{10}  & 9    & 0   &  & \textbf{10}  & 30   & 0   &  & \textbf{10}   & 146    & 0       &  & \textbf{10}   & 761    & 0       \\
300 & 500  & 15 & \textbf{10}  & 8    & 0   &  & \textbf{10}  & 105  & 0   &  & 9             & 341    & --      &  & \textbf{10}   & 1118   & 0       \\
300 & 750  & 2  & \textbf{10}  & 9    & 0   &  & \textbf{10}  & 6    & 0   &  & \textbf{10}   & 85     & 0       &  & \textbf{10}   & 82     & 0       \\
300 & 750  & 7  & \textbf{10}  & 9    & 0   &  & \textbf{10}  & 60   & 0   &  & 8             & 589    & --      &  & \textbf{10}   & 712    & 0       \\
300 & 750  & 15 & \textbf{10}  & 16   & 0   &  & \textbf{10}  & 235  & 0   &  & 9             & 419    & --      &  & \textbf{10}   & 1176   & 0       \\
300 & 1000 & 2  & \textbf{10}  & 8    & 0   &  & \textbf{10}  & 8    & 0   &  & \textbf{10}   & 205    & 0       &  & \textbf{10}   & 73     & 0       \\
300 & 1000 & 7  & \textbf{10}  & 16   & 0   &  & \textbf{10}  & 81   & 0   &  & \textbf{10}   & 251    & 0       &  & \textbf{10}   & 982    & 0       \\
300 & 1000 & 15 & \textbf{10}  & 20   & 0   &  & \textbf{10}  & 319  & 0   &  & 9             & 427    & 0.001   &  & \textbf{10}   & 1243   & 0       \\
500 & 500  & 2  & \textbf{10}  & 16   & 0   &  & \textbf{10}  & 10   & 0   &  & \textbf{10}   & 323    & 0       &  & \textbf{10}   & 193    & 0       \\
500 & 500  & 7  & \textbf{10}  & 13   & 0   &  & \textbf{10}  & 99   & 0   &  & 6             & 1053   & 0.002   &  & \textbf{10}   & 1204   & 0       \\
500 & 500  & 15 & \textbf{10}  & 20   & 0   &  & \textbf{10}  & 234  & 0   &  & 6             & 1167   & --      &  & 8             & 1460   & 0.001   \\
500 & 750  & 2  & \textbf{10}  & 23   & 0   &  & \textbf{10}  & 18   & 0   &  & \textbf{10}   & 593    & 0       &  & \textbf{10}   & 262    & 0       \\
500 & 750  & 7  & \textbf{10}  & 45   & 0   &  & \textbf{10}  & 140  & 0   &  & 8             & 938    & --      &  & 9             & 1400   & 0       \\
500 & 750  & 15 & \textbf{10}  & 44   & 0   &  & \textbf{10}  & 552  & 0   &  & 7             & 1166   & --      &  & 7             & 1721   & --      \\
500 & 1000 & 2  & \textbf{10}  & 25   & 0   &  & \textbf{10}  & 19   & 0   &  & \textbf{10}   & 573    & 0       &  & \textbf{10}   & 418    & 0       \\
500 & 1000 & 7  & \textbf{10}  & 43   & 0   &  & \textbf{10}  & 207  & 0   &  & 7             & 1060   & --      &  & 9             & 1530   & --      \\
500 & 1000 & 15 & \textbf{10}  & 54   & 0   &  & 9            & 947  & 0.1 &  & 9             & 555    & --      &  & 3             & 1770   & --      \\ \bottomrule
\end{tabular}
\caption{Results for all models on Uniform Randomly Generated Instances set with $W = [0.1, 0.8]$. Columns $M, L, Q$ represent the number of items in the instance, the bin capacity, and the number of colors respectively. For each of the proposed models, we have a column $opt$ showing the number of instances of that class (out of $10$) that were solved to proven optimality, $time$ which represents the average time in seconds, and $gap$ which represents the optimality gap computed as $(UB - LB)/UB$.}\label{tab:random-summary-1}
\end{table}

\begin{table}[ht]
\scriptsize
\centering
\begin{tabular}{@{}cccccccccccccccccc@{}}
\toprule
    &      &    & \multicolumn{3}{c}{CA-AF} &  & \multicolumn{3}{c}{ML-AF}  &  & \multicolumn{3}{c}{CC-partition} &  & \multicolumn{3}{c}{CR-partition} \\ \cmidrule(lr){4-6} \cmidrule(lr){8-10} \cmidrule(lr){12-14} \cmidrule(l){16-18} 
M   & L    & Q  & opt        & time & gap   &  & opt         & time & gap   &  & opt           & time    & gap    &  & opt            & time    & gap   \\ \midrule
300 & 500  & 2  & 9          & 576  & 0.053 &  & \textbf{10} & 733  & 0     &  & 0             & 1982    & --     &  & \textbf{10}    & 130     & 0     \\
300 & 500  & 7  & \textbf{4} & 1373 & 0.045 &  & 0           & 1801 & 0.508 &  & 0             & 1999    & --     &  & 0              & 1023    & --    \\
300 & 500  & 15 & \textbf{7} & 1117 & 0.039 &  & 0           & 1803 & 1     &  & 0             & 1934    & --     &  & 0              & 1024    & --    \\
300 & 750  & 2  & 8          & 1188 & 0.096 &  & 7           & 970  & 0.007 &  & 0             & 1991    & --     &  & \textbf{10}    & 195     & 0     \\
300 & 750  & 7  & \textbf{1} & 1698 & 0.096 &  & 0           & 1802 & 1     &  & 0             & 2012    & --     &  & 0              & 1034    & --    \\
300 & 750  & 15 & \textbf{0} & 1801 & 0.16  &  & \textbf{0}  & 1804 & 1     &  & \textbf{0}    & 1920    & --     &  & \textbf{0}     & 1037    & --    \\
300 & 1000 & 2  & 3          & 1557 & 0.304 &  & 9           & 964  & 0.005 &  & 0             & 1918    & --     &  & \textbf{10}    & 384     & 0     \\
300 & 1000 & 7  & \textbf{1} & 1791 & 0.234 &  & 0           & 1803 & 1     &  & 0             & 2033    & --     &  & 0              & 1045    & --    \\
300 & 1000 & 15 & \textbf{2} & 1732 & 0.223 &  & 0           & 1806 & 1     &  & 0             & 1901    & --     &  & 0              & 1048    & --    \\
500 & 500  & 2  & 5          & 1387 & 0.219 &  & 8           & 926  & 0.003 &  & 0             & 1847    & --     &  & \textbf{10}    & 462     & 0     \\
500 & 500  & 7  & \textbf{6} & 1457 & 0.056 &  & 0           & 1802 & 1     &  & 0             & 1862    & --     &  & 0              & 1069    & --    \\
500 & 500  & 15 & \textbf{5} & 1535 & 0.042 &  & 0           & 1805 & 1     &  & 0             & 1835    & --     &  & 0              & 1071    & --    \\
500 & 750  & 2  & 0          & 1801 & 0.409 &  & 5           & 1486 & 0.012 &  & 0             & 1826    & --     &  & \textbf{10}    & 523     & 0     \\
500 & 750  & 7  & \textbf{0} & 1801 & 0.307 &  & \textbf{0}  & 1803 & 1     &  & \textbf{0}    & 1851    & --     &  & \textbf{0}     & 1094    & --    \\
500 & 750  & 15 & \textbf{0} & 1802 & 0.282 &  & \textbf{0}  & 1808 & 1     &  & \textbf{0}    & 1821    & --     &  & \textbf{0}     & 1101    & --    \\
500 & 1000 & 2  & 0          & 1801 & 0.46  &  & 0           & 1801 & 0.046 &  & 0             & 1829    & --     &  & \textbf{9}     & 834     & --    \\
500 & 1000 & 7  & \textbf{0} & 1802 & 0.316 &  & \textbf{0}  & 1805 & 1     &  & \textbf{0}    & 1835    & --     &  & \textbf{0}     & 1134    & --    \\
500 & 1000 & 15 & \textbf{0} & 1803 & 0.314 &  & \textbf{0}  & 1810 & 1     &  & \textbf{0}    & 1821    & --     &  & \textbf{0}     & 1138    & --    \\ \bottomrule
\end{tabular}
\caption{Results for all models on Uniform Randomly Generated Instances set with $W = [0.01, 0.25]$. Captions in this table should be interpreted as in Table~\ref{tab:random-summary-1}.\label{tab:random-summary-2}}
\end{table}

Tables~\ref{tab:random-summary-1} and~\ref{tab:random-summary-2} show the results for all models on the first instance set. In Table~\ref{tab:random-summary-1} we see the results for instances with item length relative range $W = [0.1, 0.8]$, while Table~\ref{tab:random-summary-2} has the results of all instances with item length relative range $W = [0.01, 0.25]$.  

The color-alternating arc flow model of Section~\ref{sec-pseudo-caaf} (hereafter referred to as CA-AF) shows good results for this instance set, optimally solving $231$ out of the $360$ instances ($65\%$) with a relatively low average gap on the classes it does not solve completely. All instances with $W = [0.1, 0.8]$ are solved to optimality in under a minute on average. However, the results are not as impressive when it comes to instances with $W = [0.01, 0.25]$. Only $51$ out of $180$ ($28\%$) of these instances are solved to optimality, with average times coming closer to the time limit even on classes where half of the instances are solved optimally. This discrepancy can be explained by the fact that the number of arcs in the graph increases dramatically with the number of items that can be packed in a single bin, resulting in a model with many more variables in cases with $W = [0.01, 0.25]$. The largest instance in this subset to be solved to optimality by this model has $500$ items, $500$ of bin capacity, and~$15$ colors.  

Comparatively, the multilayered arc flow model of Section~\ref{sec-pseudo-mlaf} (hereafter referred to as ML-AF) shows milder results. Only $218$ out of the $360$ instances (60\%) are solved optimally while failing to even find a lower bound before the time limit in several of the instances it does not solve. When considering only instances with $W = [0.1, 0.8]$, all but one are solved to optimality in a few hundred seconds on average. On the other hand, for instances with~$W = [0.01, 0.25]$, the model was able to solve only $39$ out of $180$ ($21\%$). Also, for this subset, the model is only able to reliably find lower bounds for instances with $2$ colors. This can be explained by the fact that the number of vertices in the graph, for this model, grows by an order of $M$ for every color considered, making it difficult to even solve the root node of the MILP for instances with a large enough number of vertices and colors. The largest instance in this subset to be solved to optimality by this model has $500$ items, $750$ of bin capacity, and $2$ colors. Since this model has fewer constraints than the previous one, it may outperform CA-AF in specific cases where the number of colors is small enough, which can be observed in instances with $Q=2$.

For the colored-clusters partition model of Section~\ref{sec-exp-ccsp} (hereafter referred to as CC-partition) we have the worst results out of the models tested in this set. A total of $158$ out of $360$ instances ($43\%$) were optimally solved, with none of them being from the subset with~$W = [0.01, 0.25]$. Even for the subset with $W = [0.1, 0.8]$, it was not able to find a feasible solution for many cases, resulting in the impossibility of even computing an optimality gap (shown as ``--'' in the tables). Since this is a pattern-based model, the inability to solve instances with $W=[0.01, 0.25]$ can be explained by the fact that the number of possible patterns considered is comparatively higher for these cases. Furthermore, each pattern is generated as a resource constrained path, which also grows with the number of items that can be accommodated in the same bin, making the column generation phase more difficult in these cases. 

On the other hand, the color-resources partition model of Section~\ref{sec-exp-crsp} (CR-partition) has shown to be very competitive. It was able to solve $225$ out of $360$ instances ($62\%$) to optimality, although with a considerably higher average time. In the subset with $W = [0.1, 0.8]$, $166$ out of $180$ were optimally solved while failing to find upper bounds for some of the larger instances. Similarly to ML-AF, this model seems to perform its best in instances with only a few colors, as observed in cases with $Q=2$. In fact, for the subset with~$W=~[0.01,~0.25]$, all but one of the instances with $Q=2$ were solved to optimality. For instances with $Q>2$ in this subset, the model was interrupted before the time limit because of memory constraints, so it was not able to produce an optimality gap. This is because the color constraints are enforced as resource constraints in the column generation phase, instead of being represented as special vertices in the path like in CC-partition. Though this may result in a relative performance gain for a smaller number of colors, models with a higher number of resource constraints tend to be significantly more difficult. Furthermore, since branching on accumulated resource consumption is used, this may result in a very large branch-and-bound tree, possibly causing memory issues for large enough instances. The largest instance in this subset to be optimally solved has $500$ items, $1000$ of bin capacity, and $2$ colors. 

\subsubsection{Results on Randomly Generated Instances with Zipf Distribution of Colors}
We evaluate the proposed models on the set of Randomly Generated Instances with Zipf Distribution of Colors. In this relatively smaller instance set, we investigate the impact of the color constraints in the models by presenting cases with an unbalanced number of items for each color as well as a high number of total colors (up to 37).  In this set, all instances have lengths in the interval of $[0.1, 0.8]$ relative to the bin capacity, so maximal packings tend to contain several items. Since many items can be of the same color, it can be extremely difficult to find efficient packings.  

\begin{table}[ht!]
\centering
\begin{tabular}{cccclccclc}
\toprule
    &     &  & \multicolumn{3}{c}{CA-AF}                      &  & \multicolumn{3}{c}{ML-AF}              \\ \cline{4-6} \cline{8-10} 
M   & L   &  & opt         & \multicolumn{1}{c}{time} & gap   &  & opt & \multicolumn{1}{c}{time} & gap   \\ \midrule
300 & 300 &  & \textbf{8}  & 509                      & 0.049 &  & 3   & 1732                     & 0.700 \\
300 & 500 &  & \textbf{9}  & 539                      & 0.060 &  & 0   & 1805                     & 1     \\
300 & 750 &  & \textbf{9}  & 897                      & 0.002 &  & 0   & 1807                     & 1     \\
500 & 300 &  & \textbf{10} & 176                      & 0     &  & 0   & 1806                     & 0.941 \\
500 & 500 &  & \textbf{8}  & 1022                     & 0.111 &  & 0   & 1810                     & 1     \\
500 & 750 &  & \textbf{4}  & 1665                     & 0.291 &  & 0   & 1816                     & 1     \\ \bottomrule
\end{tabular}
\caption{Results for arc flow models on Randomly Generated Instances with Zipf Distribution of Colors set. Column $M$ represents the number of items in the instance, and $L$ is the bin capacity considered. Each model has a set of columns $opt$, $time$, and $gap$ meaning, respectively, amount of instances to be solved optimally (out of $10$), average time in seconds, and average gap computed as $(UB - LB)/UB$.}\label{tab:zipf-summary}
\end{table}

Table~\ref{tab:zipf-summary} shows a summary of the results for this set. Following a trend observed in the previous instance set, model CC-partition failed to solve any of the instances in this set, so we decided to omit it from the reports. Due to the high number of colors considered in all instances of this set, model CR-partition failed to even load most of them due to memory constraints, so this model is also omitted from the reports. 

Model CA-AF was able to optimally solve $48$ out of $60$ instances ($80\%$) in this set, while ML-AF was only able to solve $3$ to optimality ($0.05\%$). This was expected since we knew from previous analysis that ML-AF could only surpass CA-AF in cases with a very small number of colors. We can also notice that despite the significantly larger number of colors, CA-AF was faster on average and solved more instances when compared to similar instances in the Uniform Randomly Generated Instances set. This shows that, unlike any of the other proposed models, CA-AF is reasonably efficient in dealing with high-degree color constraints. 

\subsubsection{Results on Instances Adapted from BPP Literature}
We evaluate the proposed models on the set of Instances Adapted from BPP Literature. In this set, we adapt difficult instances from the BPP literature, namely the AI and ANI instances, to instances of the CBPP\@. These instances have specific properties to their lower bounds while solved by a classic set-cover formulation, so to maintain these properties, we guarantee that the optimal solution value is maintained when adapting them to the CBPP\@. Additionally, we consider only two colors in the adaptation, so all models can be competitive. For more details on the original instances and their properties, we refer to~\citet{delormeIM18}. Table~\ref{tab:aiani-summary} shows the results for all models in this instance set. 

\begin{table}[ht!]
\scriptsize
\centering
\begin{tabular}{@{}ccccccccccccccccc@{}}
\toprule
                &    & \multicolumn{3}{c}{CA-AF} &  & \multicolumn{3}{c}{ML-AF} &  & \multicolumn{3}{c}{CC-partition} &  & \multicolumn{3}{c}{CR-partition} \\ \cmidrule(lr){3-5} \cmidrule(lr){7-9} \cmidrule(lr){11-13} \cmidrule(l){15-17} 
instance        & \# & opt    & time   & gap     &  & opt    & time   & gap     &  & opt           & time   & gap     &  & opt           & time   & gap     \\ \midrule
201\_2500\_AI   & 50 & 42     & 951    & 0.003   &  & 39     & 1099   & 0.023   &  & 45            & 392    & 0.002   &  & \textbf{46}   & 386    & 0.001   \\
402\_10000\_AI  & 36 & 0      & 1805   & 1       &  & 0      & 1804   & 1       &  & 25            & 994    & --      &  & \textbf{32}   & 1105   & --      \\
600\_20000\_AI  & 8  & 0      & 1816   & 1       &  & 0      & 1813   & 1       &  & \textbf{2}    & 1759   & --      &  & 0             & 2271   & --      \\
201\_2500\_ANI  & 49 & 39     & 846    & 0.005   &  & 44     & 791    & 0.042   &  & 41            & 784    & --      &  & \textbf{48}   & 471    & 0       \\
402\_10000\_ANI & 11 & 0      & 1804   & 1       &  & 0      & 1804   & 1       &  & \textbf{1}    & 1772   & --      &  & 0             & 1810   & --      \\ \bottomrule
\end{tabular}
\caption{Results for all models on Instances Adapted from BPP Literature. Column $instance$ describes the original instance class, with the first number being an approximation of the number of items, and the second number an approximation of the bin capacity. Column $\#$ shows the number of instances for each class. Each model has a set of columns $opt$, $time$, and $gap$ meaning, respectively, amount of instances to be solved optimally, the average time in seconds, and the average gap computed as $(UB - LB)/UB$.}\label{tab:aiani-summary}
\end{table}

Model CA-AF was only able to solve the smaller instances of each set, being $42$ out of $94$ ($44\%$) from AI and $39$ out of $60$ ($65\%$) from ANI\@. The model was unable to even compute a lower bound for any instance of size greater than $200$. This was expected since the AI and ANI sets are very difficult for simpler arc flow models. Advanced reduction techniques are often required to deal with such instances in the BPP, such as the reflect model of~\citet{delormeI20} or the meet-in-the-middle model of \citet{coteI18}. Because of the nature of the color constraints, such techniques cannot be trivially adapted to the CBPP\@.

Model ML-AF does slightly better than CA-AF, solving $39$ out of $94$ instances ($41\%$) from AI and $44$ out of $60$ ($73\%$) from ANI\@. Similarly to the previous model, only instances of size $200$ were solved. This result was expected, due to ML-AF outperforming CA-AF in cases with only a few colors in previous experiments. Since this set only has instances with~$2$ colors, ML-AF presents a smaller number of constraints when compared to CA-AF, which may explain the slightly better results. 

We expected the exponential models to perform very well in this set, due to previous results of VRPSolver BPP models \citep{pessoaSUV20} even surpassing those of \citet{delormeI20}. This was confirmed, with both partition models performing similarly well. CC-partition solved $72$ out of $94$ instances ($76\%$) from AI and $42$ out of $60$ ($70\%$) from ANI, while CR-partition solved $78$ out of $94$ instances ($82\%$) from AI and $48$ out of $60$ ($80\%$) from ANI\@. Interestingly, although model CR-partition was able to solve more instances in total, model CC-partition was the only one to solve $2$ instances from AI with size $600$ and one instance from ANI with size $400$. This shows us that both these models have a high potential for solving difficult instances for cases when the number of colors is $2$.

\section{Concluding Remarks\label{sec-conclusions}}
In this work, we study the Colored Bin Packing Problem under an exact approach. We start by proposing a generalization of the arc flow model of \citet{carvalho99}, using multiple layers to represent different colors (ML-AF). Then, we propose a novel arc flow formulation with color-alternating constraints (CA-AF) and demonstrate that the additional constraints are sufficient to model the CBPP without having to use one layer per color. Furthermore, we prove that the two arc flow models are equivalent in terms of linear relaxation strength. We also present two set-partition models (CC-partition and CR-partition) to be solved by a branch-cut-and-price algorithm through a reduction to a general Vehicle Routing Problem with VRPSolver. A diverse benchmark set is also proposed to evaluate the models.   

Experimental results comparing all the proposed models are presented. Among the ones considered, the CA-AF can solve the most instances, the largest of which has $500$ items and~$37$ colors. However, when only two colors are considered, the set-partition models are shown to outperform the other models. 

It would be interesting to investigate in the future, the use of the characterization of feasible solutions given in Theorem~\ref{theo:definition} to design efficient heuristics and meta-heuristics for the CBPP\@. This characterization can also pave the way for new approximation algorithms for the CBPP or even approximation schemes for a colored variant of the knapsack problem. Good algorithms for the colored variant of the knapsack problem may lead to a good branch-and-price algorithm to solve the set-partition model based on the formulation of \citet{gilmoreG61}.

\section*{Acknowledgments}
This project was supported by the S\~ao Paulo Research Foundation (FAPESP) grants
\mbox{\#2015/11937--9} and     % temático fkm
\mbox{\#2022/05803--3},        % temático do Reinaldo recém aprovado 
% \mbox{\#2016/23552--7};      % regular rafael
%
and the Brazilian National Council for Scientific and Technological Development (CNPq) grants
%
%\mbox{\#425340/2016--3},       % universal fkm
%\mbox{\#308689/2017--8},       % pq antiga rafael
\mbox{\#144257/2019--0},       % bolsa yulle
\mbox{\#311039/2020--0} and    % pq rafael
\mbox{\#313146/2022--5}.       % pq fkm
This study was financed in part by the Coordena\c{c}\~ao de Aperfei\c{c}oamento de Pessoal de N\'\i{v}el Superior~-~Brasil (CAPES)~-~Finance Code 001.

\section*{Declarations of Interest} 
Yulle G. F. Borges: None; Rafael C. S. Schouery: None; Flávio K. Miyazawa: None.

\bibliography{../ref.bib}

\end{document}